\newtheorem{theorem}{Theorem}
\newtheorem{proposition}{Proposition}
\newtheorem{question}{Question}
\newtheorem{remark}{Remark}
\numberwithin{equation}{section} \numberwithin{theorem}{section}
\numberwithin{proposition}{section} \numberwithin{lemma}{section}
\numberwithin{claim}{section} \numberwithin{corollary}{section}
\newcounter{xpl}[section]
\numberwithin{equation}{section} \numberwithin{theorem}{section}
\newcommand{\mapright}[1]{\smash{\mathop{   \hbox to 0.7cm{\rightarrowfill}}
 \limits^{#1}}}
\newcommand{\Z}{\ensuremath{\mathbb{Z}}}
\newcommand{\C}{\ensuremath{\mathbb{C}}}
\newcommand{\reduce}[1]{\scalebox{1}{\ensuremath{#1}}}
\newcommand{\bracket}[1]{\ensuremath{\langle #1 \rangle}}
\newcommand{\norm}[1]{\ensuremath{\left| #1 \right|}}
 \DeclareMathOperator{\rank}{rank}
\DeclareMathOperator*{\smallwedge}{\reduce{\bigwedge}}
\def\C{\mathbb C}
\def\D{\Delta}
\def\P{\mathbb{P}}
\def\X{\mathscr{X}}
\begin{document}
\title[Diffeomorphism classes of the doubling Calabi-Yau threefolds]{Diffeomorphism classes of the doubling Calabi-Yau threefolds with Picard number two}

\author{Naoto Yotsutani}

\address{Kagawa University, Faculty of education, Mathematics, Saiwaicho $1$-$1$, Takamatsu, Kagawa,
$760$-$8522$, Japan}
\email{yotsutani.naoto@kagawa-u.ac.jp}

\subjclass[2010]{Primary: 14J32, Secondary: 14J45, 57R50, 57N15}
\keywords{Calabi-Yau manifolds, 
diffeomorphism, cubic intersection form, the $\lambda$-invariant.} \dedicatory{}
\maketitle
\noindent{\bfseries Abstract.}
Previously we constructed Calabi-Yau threefolds by a differential-geometric gluing method using Fano threefolds with their smooth anticanonical $K3$ divisors \cite{DY14}. 
In this paper, we further consider the diffeomorphism classes of the resulting Calabi-Yau threefolds (which are called the \emph{doubling Calabi-Yau threefolds}) 
starting from different pairs of Fano threefolds with Picard number one. Using the classifications of simply-connected $6$-manifolds in differential topology and the \emph{$\lambda$-invariant} introduced by Lee \cite{Lee20},
we prove that any two of the doubling Calabi-Yau threefolds with Picard number two are not diffeomorphic to each other when the underlying Fano threefolds are distinct families. 

\section{Introduction}\label{sec:Intro}
The purpose of this paper is to consider the diffeomorphism classes of Calabi-Yau $3$-folds with Picard number two constructed in our differential-geometrical gluing method. 
In \cite{DY14}, Doi and the author gave a differential-geometric construction (the \emph{doubling construction}) of Calabi-Yau $3$-folds starting from Fano $3$-folds with their smooth anticanonical $K3$ divisors 
(see Theorems $\ref{thm:Kov}$ and $\ref{thm:DY}$ for more details). Throughout this paper, we call the resulting Calabi-Yau $3$-folds obtained by the doubling construction the \emph{doubling Calabi-Yau $3$-folds}.

Calabi-Yau manifolds play the crucial role both in algebraic geometry and differential geometry. In particular, they form one of the building blocks in the classification of algebraic varieties up to birational isomorphism. 
We recall that a Calabi-Yau $n$-fold is an $n$-dimensional compact K\"ahler manifold $X$ whose canonical bundle is trivial and $H^i(X,\mathcal O_X)=0$ for $0<i<n$.
Then it is well-known that $X$ admits a Ricci-flat K\"ahler metric with holonomy $\mathrm{SU}(n)$. In the case of $n=1$, the only examples are genus $1$ curves and they are all diffeomorphic to each other.
In the case of $n=2$, Calabi-Yau surfaces are so-called $K3$ surfaces. Any two $K3$ surfaces are diffeomorphic as smooth $4$-manifolds (see Theorem $7.1.1$ in \cite{Huy16}).

In higher dimensions the classification of Calabi-Yau manifolds is hard to deal with and many problems are still open.
For example, it is an open problem whether or not the number of topological types of Calabi-Yau $3$-folds is bounded, while large amount of examples of Calabi-Yau $3$-folds have been discovered by many researchers 
in the context of Mirror symmetry. 
For a Calabi-Yau $3$-fold $X$, the \emph{topological type} of $X$ means the pair of Hodge numbers $(h^{1,1}(X), h^{2,1}(X))$.
It is known that if two Calabi-Yau $3$-folds $X_1$ and $X_2$ have the different topological types, they are not homeomorphic.
By contraposition, if two Calabi-Yau $3$-folds $X_1$ and $X_2$ homeomorphic, then they must have the same topological type.
It is an interesting question to ask whether the converse is also true, that is, if two Calabi-Yau $3$-folds $X_1$ and $X_2$ have the same topological type, are they homeomorphic too?
We prove that the answer of this question is {\emph{no}} in general (see Theorem $\ref{thm:main}$).

From a differential-geometric point of view, it is natural to classify Calabi-Yau $3$-folds up to diffeomorphism classes. For this purpose we use the classification of closed, oriented simply-connected $6$-manifolds by Wall, Jupp and Zhubr (see Section $\ref{sec:Diffeo}$ and the website of the Manifold Atlas Project, $6$-manifolds: $1$-connected \cite{MAP} for a good overview which includes further references). 
In a ward, two closed simply-connected $6$-manifolds that are homeomorphic are necessarily diffeomorphic. Classification of simply-connected $6$-manifolds with torsion free homology up to diffeomorphism classes is determined by the {\emph{basic invariants}} in \cite[p.$300$]{OkVa95}. The essential invariants here are the cubic intersection forms on $H^2(X,\Z)$ and the Chern classes.

We recall that two compact complex manifolds $X_1$ and $X_2$ are said to be {\emph{deformation equivalent}} if there is a smooth proper holomorphic map $\varpi: \X \to B$  satisfying the following conditions:
\begin{enumerate}
\item[(i)] The total space $\X$ and the base space $B$ are connected.
\item[(ii)] There exist two points $t_1, t_2 \in B$ such that $\varpi^{-1}(t_i) \cong X_i$ $(i=1,2)$.
\end{enumerate} 
Remark that deformation equivalent Calabi-Yau $3$-folds are in particular diffeomorphic although the converse is not true in general.
In fact, there exist diffeomorphic Calabi-Yau $3$-folds which are not deformation equivalent. See \cite{Gr97} and \cite[p.$420$]{Th99}.

There are several ways to construct Calabi-Yau $3$-folds in the study of algebraic and complex geometry.
The large amount of examples of Calabi-Yau $3$-folds are brought by Batyrev construction in the context of mirror symmetry \cite{Bat94}. 
Starting from a $4$-dimensional reflexive polytope $\Delta$, we consider the associated Gorenstein toric Fano variety $\P_\D$ and take a generic anticanonical section $\overline{X}$ in $\P_\D$.
By taking a crepant resolution of $\overline{X}$, we obtain a Calabi-Yau $3$-fold whose Hodge numbers are explicitly calculated by combinatorial data of $\D$.
We remark that $4$-dimensional reflexive polytopes are already classified and there are $473,800,776$ isomorphism classes.
Indeed, there exist (at least) $30,108$ topologically distinct Calabi-yau $3$-folds obtained by Batyrev's construction \cite{KS00}.
This means that most of the values of their Hodge numbers  stay in the same region for some reason.
Hence it is interesting to ask the number of diffeomorphism types of the resulting Calabi-Yau $3$-folds (with the aid of computer programs), by calculating not only their Hodge numbers but also
the cubic forms and the $\lambda$-invariants (see, Section \ref{sec:CubicForm}).

Another construction was developed by Kawamata and Namikawa \cite{KN94}. They investigated log deformation theory of normal crossing varieties and constructed examples of Calabi-Yau $3$-folds by smoothing
simple normal crossing varieties. Later on, Lee investigated log deformations of simple normal crossing varieties of two 
irreducible components which are so-called Tyurin degenerations (see, Definition V.$1$ in \cite{LeeThesis}). In particular, he gave explicit descriptions of the Picard groups, Hodge numbers and the second Chern classes of $X_t$, where $X_t=\varpi^{-1}(t)$ $(t\neq 0)$ is a general fiber of Tyurin degenerations $\varpi:\mathscr{X}\to B$ of a simple normal crossing variety $X_0=\varpi^{-1}(0)=Y_1\cup Y_2$. See \cite{Lee10} for more details.


Differential-geometric counterpart of Lee's construction was studied by Doi and the author in \cite{DY14}.
We gave a differential-geometric construction ({{the doubling construction}}) of Calabi-Yau $3$-folds by gluing two asymptotically cylindrical Ricci-flat K\"ahler manifolds along their cylindrical ends 
in appropriate conditions. Also we computed the set of all Hodge numbers $(h^{1,1},h^{2,1})$ (i.e. the topological types) of Calabi-Yau $3$-folds obtained by the doubling construction. 
See Figure $1$ and Section $\ref{sec:DY}$ for more details.
\begin{figure}[h]
\centering
\includegraphics[width=\hsize ,clip]{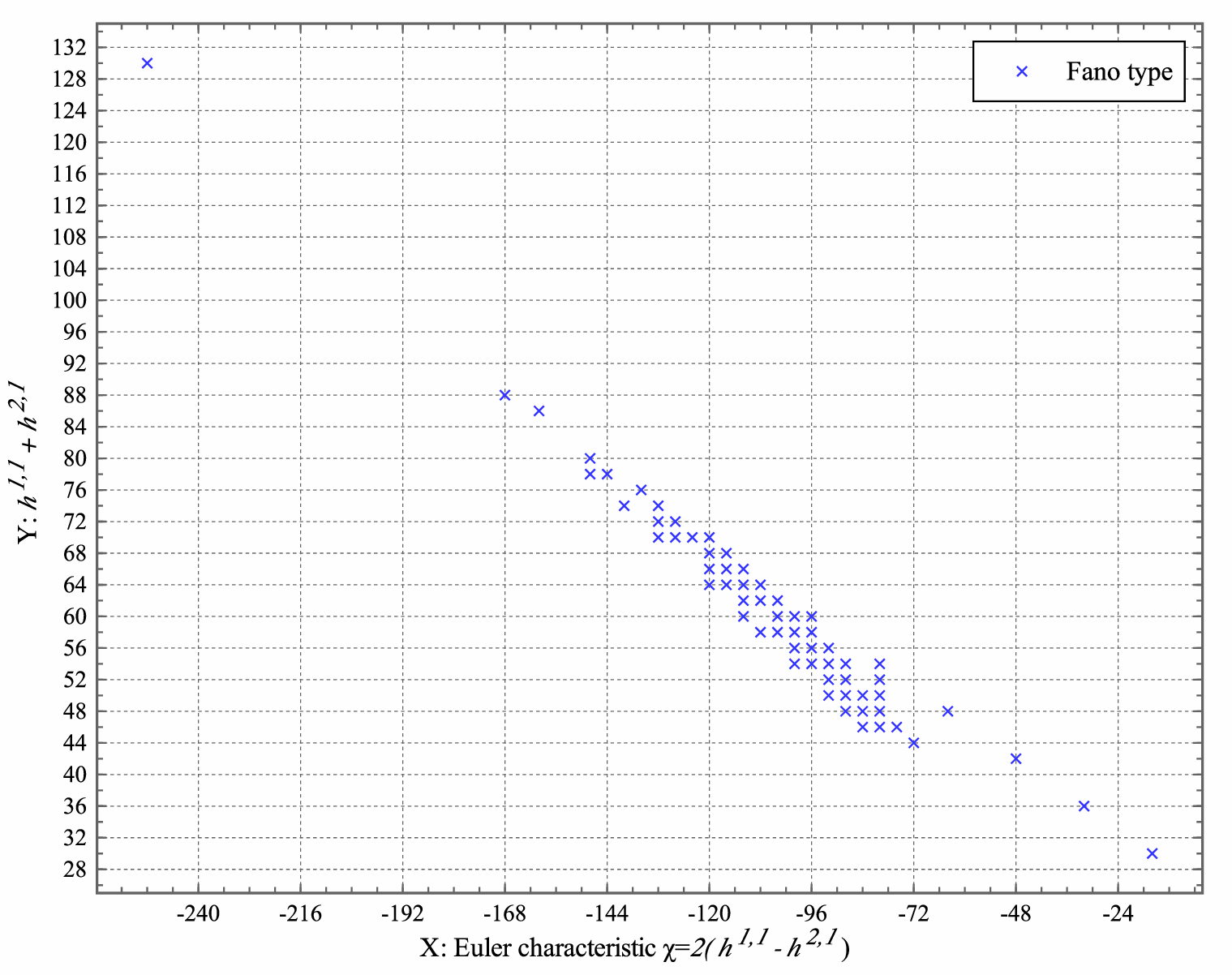}
\caption{All resulting Calabi-Yau $3$-folds using Fano $3$-folds}
\end{figure}
Consequently we found $61$ distinct $(h^{1,1}(M),h^{2,1}(M))$ for the doubling Calabi-Yau $3$-folds $M$ (Proposition $\ref{prop:DY}$).  
As ingredients of the doubling construction, we used {\emph{admissbile pairs}} $(Y,D)$ consisting of a three dimensional compact K\"ahler manifold $Y$ and a smooth anticanonical divisor $D$ on $Y$.
We can construct admissible pairs from Fano $3$-folds as follows.
Beginning with a Fano $3$-fold $V$ with a smooth anticanonical $K3$ divisor $D$, one can show that if we blow-up $V$ along a curve representing $D\cdot D$ to obtain $Y$,
then $Y$ has an anticanonical divisor isomorphic to $D$ (denoted by $D$ again) with the holomorphic normal bundle $\mathcal N_{Y/D}$ trivial. In particular, $(Y,D)$ is an admissible pair (Theorem $\ref{thm:Kov}$).
Note that $Y$ itself is not a Fano $3$-fold in this construction.
According to the classification of smooth Fano $3$-folds
\cite{Is77, MM81, MM03}, there are $105$ algebraic families with Picard numbers $1\leqslant \rho(V) \leqslant 10$. 
It is remarkable that $61$ ($=$ the total number of topological types of the doubling Calabi-Yau $3$-folds) is much less than $105$ families of Fano 
$3$-folds. 
This means that many of the doubling Calabi-Yau $3$-folds $M$ 
have the same Hodge numbers $(h^{1,1}(M), h^{2,1}(M))$ even though we use distinct Fano $3$-folds as ingredients.
More precisely, as listed in Table $\ref{table:CY3Pic2}$, there are $8$ doubling Calabi-Yau $3$-folds $M$ with Picard number two which have the same Hodge numbers $(h^{1,1}(M), h^{2,1}(M))$ although we started from
distinct $17$ families of Fano $3$-folds with Picard number one. 
These $8$ overlapping Hodge numbers $(h^{1,1}(M), h^{2,1}(M))$ are listed with symbol $\checkmark$ on the table.
\begin{table}\label{table:CY3Pic2}
\centering
    \caption{The doubling Calabi-Yau $3$-folds with Picard number two and the underlying Fano $3$-folds with Picard number one}
 \begin{tabular}{r||c|c|c}
 \vspace{-0.23cm}                                                               &                    &                              &      \\
         \vspace{-0.23cm}   ID in \cite{Fano}                                     &  $-K_V^3$   &     $h^{1,2}(V)$          &  $(h^{1,1}(M),h^{2,1}(M))$    \\
                                                                                 &                     &                            &     \\
 \hline
$1$-$1$     \hspace{0.5cm}                                                             &  $2$             & $52$              & $(2,128)$    \\
 $1$-$2$       \hspace{0.5cm}                                                                         &   $4$            & $30$             & $\checkmark~ (2,86)$ \hspace{0.34cm}   \\
$1$-$3$               \hspace{0.5cm}                                                         &   $6$             & $20$             & $(2,68)$    \\
 $1$-$4$                \hspace{0.5cm}                                                             &  $8$             & $14$             & $\checkmark~ (2,58)$  \hspace{0.34cm}   \\
$1$-$5$                \hspace{0.5cm}                                                                 &  $10$            & $10$              & $(2,52)$    \\
$1$-$6$                  \hspace{0.5cm}                                                             &  $12$             & $7$              & $(2,48)$    \\
$1$-$7$                  \hspace{0.5cm}                                                       &   $14$            & $5$             & $(2,46)$    \\
 $1$-$8$                    \hspace{0.5cm}                                                          &   $16$             & $3$             & $\checkmark~ (2,44)$ \hspace{0.34cm}    \\
 $1$-$9$                    \hspace{0.5cm}                                                        &  $18$             & $2$             & $\checkmark~ (2,44)$  \hspace{0.34cm}   \\
 $1$-$10$                  \hspace{0.5cm}                                                           &  $22$            & $0$              & $\checkmark~ (2,44)$ \hspace{0.34cm}     \\
$1$-$11$                    \hspace{0.5cm}                                                         &  $8$             & $21$              & $(2,72)$    \\
 $1$-$12$                   \hspace{0.5cm}                                                 &   $16$            & $10$             & $\checkmark~ (2,58)$ \hspace{0.34cm}    \\
$1$-$13$                    \hspace{0.5cm}                                                      &   $24$             & $5$             & $(2,56)$    \\
 $1$-$14$                  \hspace{0.5cm}                                                        &  $32$             & $2$             & $\checkmark~ (2,58)$ \hspace{0.34cm}    \\
$1$-$15$                  \hspace{0.5cm}                                                    &  $40$            & $0$              & $(2,62)$    \\
$1$-$16$                   \hspace{0.5cm}                                                &  $54$             & $0$              & $(2,76)$    \\
 $1$-$17$                   \hspace{0.5cm}                                              &  $64$             & $0$              & $\checkmark~ (2,86)$ \hspace{0.34cm}     \\
  \end{tabular}
  \end{table}
Remark that all of the underlying Fano $3$-folds $V$ have the same Picard number one because the cohomology of $M$ is determined by the cohomology of $V$ in the doubling construction. For more details, see Proposition $5.2$ in \cite{DY14}. We also use the database ID in \cite{Fano} to identify the underlying Fano $3$-folds $V$. We remark that there are other examples of such doubling Calabi-Yau $3$-folds $M$ having the same Hodge numbers when
the underlying Fano $3$-folds $V$ have the Picard number $\rho(V)\geqslant 2$. See Tables $6.2$--$6.5$ in \cite{DY14} for the details.
Hence it is natural to ask whether they are further diffeomorphic or not.

\begin{question}\label{Q:CY3-folds}
Let $V_1$ and $V_2$ be distinct Fano $3$-folds among $105$ families. Let $M_1$ and $M_2$ be the doubling Calabi-Yau
$3$-folds having the same Hodge numbers $(h^{1,1}, h^{2,1})$. Are they also diffeomorphic? 
\end{question}
We prove that the answer to Question $\ref{Q:CY3-folds}$ is negative if the underlying Fano $3$-folds have Picard number one.
\begin{theorem}\label{thm:main}
Let $V_1$ and $V_2$ be two distinct Fano $3$-folds with Picard number one.
Then the corresponding doubling Calabi-Yau $3$-folds $M_1$ and $M_2$ are not diffeomorphic. 
\end{theorem}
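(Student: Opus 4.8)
The plan is to apply the diffeomorphism classification of closed, oriented, simply-connected spin $6$-manifolds with torsion-free integral homology, and to compute the resulting complete system of invariants for $M_1$ and $M_2$ explicitly in terms of the underlying Fano data, using \cite{DY14} for the cohomology and Lee's $\lambda$-invariant \cite{Lee20} for the finer part of the comparison. First I would check that each doubling Calabi--Yau threefold $M_i$ is covered by the classification: it is closed and simply-connected (the two asymptotically cylindrical pieces and the $K3$ neck are simply-connected, so $\pi_1(M_i)=1$ by van Kampen; cf.\ \cite{DY14}); it is spin because $c_1(M_i)=0$ and $w_2(M_i)$ is the mod-$2$ reduction of $c_1(M_i)$; and $H^\ast(M_i,\Z)$ is torsion-free by the computations of \cite[\S5]{DY14}. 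Moreover $b_2(M_i)=h^{1,1}(M_i)=2$ since $\rho(V_i)=1$, and $b_3(M_i)=2\bigl(1+h^{2,1}(M_i)\bigr)$. By the Wall--Jupp--Zhubr theorem (see \cite{MAP} and \cite[p.~300]{OkVa95}), $M_1\cong M_2$ if and only if $b_3(M_1)=b_3(M_2)$ and there is an isomorphism $\phi\colon H^2(M_1,\Z)\to H^2(M_2,\Z)$ taking the cubic form $\mu_{M_2}(x)=\langle x^3,[M_2]\rangle$ to $\mu_{M_1}$ and $c_2(M_2)$ (equivalently $p_1(M_2)=-2c_2(M_2)$) to $c_2(M_1)$. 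Hence it suffices to produce, for each doubling Calabi--Yau threefold $M$ with $\rho=2$, a $\GL(2,\Z)$-invariant of the pair $\bigl(\mu_M,c_2(M)\bigr)$ on $H^2(M,\Z)\cong\Z^2$ which, together with $b_3(M)$, distinguishes the $17$ families of $V$.

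Here the geometry of the doubling construction simplifies matters. The glued $K3$ surface $D\subset M$ is a complex divisor with trivial normal bundle, so its Poincar\'{e} dual $d\in H^2(M,\Z)$ satisfies $d\cup d=0$ in $H^4(M,\Z)$; consequently $\mu_M(d,d,\,\cdot\,)\equiv 0$, i.e.\ the binary cubic $\mu_M$ is divisible by the square of the linear form cutting out $d^{\perp}$, hence is highly degenerate, and $d$ is its unique double root up to sign. Completing $d$ to a basis $(d,h)$ of $H^2(M,\Z)$, the number $\mu_M(d,h,h)$ is unchanged under $h\mapsto h+kd$ and so is a genuine invariant of $\mu_M$; this is, in essence, Lee's $\lambda$-invariant \cite{Lee20}. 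Taking $h$ induced by the ample generator $H$ of $\mathrm{Pic}(V)$ and using \cite[Prop.~5.2]{DY14} together with the blow-up description $Y=\mathrm{Bl}_C V$ of the admissible pair, I would evaluate $\lambda(M)=\mu_M(d,h,h)$, as well as $\mu_M(h,h,h)$, $\langle c_2(M)\cup d,[M]\rangle$ and $\langle c_2(M)\cup h,[M]\rangle$, in closed form in terms of the classical Fano invariants --- the degree $H^3$, the index $r_V$ (so $-K_V=r_V H$ and $-K_V^3=r_V^3H^3$), and $c_2(V)\cdot H$ --- all recorded in \cite{Fano}; for instance $\lambda(M)=\langle(H|_D)^2,[D]\rangle=H^2\cdot(-K_V)=r_V H^3$, while $\langle c_2(M)\cup d,[M]\rangle=24$ so that $\langle c_2(M)\cup h,[M]\rangle$ is an invariant modulo $24$.

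Then I would carry out the comparison. If $h^{2,1}(M_1)\neq h^{2,1}(M_2)$ then $b_3(M_1)\neq b_3(M_2)$ and $M_1\not\cong M_2$. Otherwise $(h^{1,1},h^{2,1})(M_1)=(h^{1,1},h^{2,1})(M_2)$, which by Table~\ref{table:CY3Pic2} confines $\{V_1,V_2\}$ to finitely many pairs of distinct families, namely the $\checkmark$-rows, split into the three groups with $(h^{1,1},h^{2,1})$ equal to $(2,86)$, $(2,58)$ and $(2,44)$. For each of these the Fano invariants $(-K_V^3,h^{1,2}(V))$ still differ, and one checks group by group that the invariant $\lambda(M)=r_V H^3$ --- supplemented, in the cases where $r_V H^3$ happens to coincide (such as the quartic threefold versus $\P^3$, or $V_8$ versus the degree-$4$ del Pezzo threefold), by $\mu_M(h,h,h)$ and by $\langle c_2(M)\cup h,[M]\rangle$ modulo its ambiguities --- takes distinct values; hence no isomorphism $\phi$ as in the classification can exist, and $M_1\not\cong M_2$.

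The main obstacle is the explicit computation of the previous paragraphs: pushing the gluing and the blow-up through to closed formulas for $\mu_M$ and $c_2(M)$ with respect to the correct integral basis of $H^2(M,\Z)$, and then verifying that the resulting invariants genuinely separate the finitely many pairs of Fano families that produce the same Hodge numbers. A crude device such as the discriminant of the binary cubic is of no use here, since it vanishes identically ($\mu_M$ being divisible by a square); this is precisely why the tailor-made $\lambda$-invariant, rather than classical binary-cubic-form theory, is required, and confirming that it --- together with the $c_2$-data --- is sharp enough in every overlapping case is where the real work lies.
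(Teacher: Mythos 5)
Your framework --- Wall--Jupp for closed, simply-connected, spin $6$-manifolds with torsion-free homology, reduction to the finitely many $\checkmark$-pairs of Table~\ref{table:CY3Pic2} with equal $b_3$, Lee's description of $H^2(M,\Z)$ and the blow-up intersection theory as computational input --- is exactly the paper's. The gap is in your key invariant. What you call ``in essence Lee's $\lambda$-invariant'' is not Lee's invariant: Lee defines $\lambda(M)=\norm{m^3}$ where $m$ generates the rank-one subgroup $\Set{\ell\in H^2(M,\Z)_f | c_2(M)\cdot\ell=0}$, whereas your quantity is $\norm{\mu_M(d,h,h)}$ with $d$ the class of the $K3$ neck, i.e.\ the double root of the binary cubic. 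These are genuinely different classes --- indeed $c_2(M)\cdot d=c_2(D)=24\neq 0$, as you yourself note, so $d$ is not proportional to $m$ --- and your invariant is strictly weaker. By your own closed formula $\norm{\mu_M(d,h,h)}=(-K_V)\cdot H^2=r_VH^3$, it equals $4$ for \emph{both} members of the $(2,86)$ group (quartic threefold: $1\cdot 4$; $\C P^3$: $4\cdot 1$) and $8$ for two of the three members of the $(2,58)$ group ($V(2,2,2)\subset\C P^6$: $1\cdot 8$; $V(2,2)\subset\C P^5$: $2\cdot 4$). So in two of the three overlapping groups your primary invariant provably fails, and the whole burden falls on the ``supplementary'' quantities $\mu_M(h,h,h)$ and $c_2(M)\cdot h$, which are only well defined up to sign and modulo $3\mu_M(d,h,h)$ (resp.\ the analogous ambiguity) because $h$ is determined only up to $h\mapsto \pm h+kd$. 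You assert that these suffice but do not verify it, and that verification is precisely where the content of the theorem lies; as written, the step ``one checks group by group that $\lambda(M)=r_VH^3$ takes distinct values'' is false.

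The paper avoids all of this by computing the genuine $\lambda$-invariant: in each overlapping group it determines $c_2(M)$ from the blow-up formula (using Riemann--Roch--Hirzebruch to get $c_2(V)$ for the genus $9$, $10$, $12$ Fano $3$-folds), finds the generator $m$ of the annihilator of $c_2(M)$, and evaluates $\norm{m^3}$, obtaining pairwise distinct values within each group ($540$ vs.\ $4320$; $1920$, $208516$, $3440828$; $1672224$, $5529560$, $122507896$ --- see Table~\ref{table:invariants}); Proposition~\ref{prop:lambda_inv} then finishes every case with a single invariant. Your route could in principle be salvaged --- e.g.\ for the quartic versus $\C P^3$ the residues $\{\pm 8\}$ and $\{\pm 2\}$ of $\mu_M(h,h,h)$ modulo $3\mu_M(d,h,h)=12$ are disjoint --- but each such comparison must actually be carried out for every pair your primary invariant does not separate, and until that is done the proof is incomplete.
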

Section $\ref{sec:CubicForm}$ gives the proof of Theorem $\ref{thm:main}$.
As we mentioned in the above, in the proof of Theorem $\ref{thm:main}$, 
we use the classification results of closed, oriented, simply-connected $6$-manifolds with torsion free homology. 
One of the essential invariant in our proof is the cubic intersection form $\mu$ (see $\eqref{Invariants}$ for the definition) on $H^2(M,\mathbb Z)$.
We explicitly compute the values of the cubic forms $\mu$ for $8$ doubling Calabi-Yau $3$-folds with symbol $\checkmark$ in Table $\ref{table:CY3Pic2}$.
Taking specific Calabi-Yau $3$-folds $M$ and $M'$, we prove that each of them has the different values of cubic forms in $H^6(M,\Z)$ (resp. in $H^6(M',\Z)$).
However, we still can not conclude that there is no isomorphism between their cubic forms which sends $c_2(M)$ to $c_2(M')$. This is because the values of cubic forms may change if we use different basis 
of $H^2(M,\Z)$ (resp. $H^2(M',\mathbb Z)$). This is similar to the situation that if we use different basis of an inner product vector space $\mathbb V$, then we have the different values of inner products between
elements in the basis. Therefore, after calculating cubic forms for some specific basis of $H^2(M,\Z)$ and $H^2(M',\Z)$, we still need to show that the cubic forms $\mu$ and $\mu'$ are not isomorphic.
In order to solve this problem, we use the {\emph{$\lambda$-invariant}} which has been introduced in \cite{Lee20} to distinguish $M$ and $M'$ up to diffeomorphism.
We note that the $\lambda$-invariant can be defined for any compact complex $3$-fold with $h^2(M):=\rank (H^2(M,\Z))=2$ whose second Chern class $c_2(M)$ is not zero.
All of the values of cubic forms and the $\lambda$-invariants are listed in Table $\ref{table:invariants}$ which will be used in the proof of Theorem $\ref{thm:main}$.
\begin{table}
   \caption{Cubic forms and the $\lambda$-invariants of $8$ overlapping doubling Calabi-Yau $3$-folds with Picard number two.}
\begin{tabular}{cccccc}\toprule \label{table:invariants}
&Description of & & && \vspace{-0.25cm} \\ 
ID in \cite{Fano}& \qquad& $(h^{1,1}(M),h^{2,1}(M))$ & $(e_1^3, e_1^2e_2, e_1e_2^2, e_2^3)$ & $\lambda(M)$ \vspace{-0.25cm} \\ 
&Fano $3$-fold & \qquad&&  &  \\ \midrule
$1$-$2$ &$V(4)\subset \C P^4 $  &$(2,86)$ & $(8,4,0,0)$ & $540$ \\ 
$1$-$17$ &$\C P^3$ & $(2,86)$ & $(2,4,0,0)$ & $4320$ \\ \hline
$1$-$8$ &$V_{16}\subset \C P^{10}$  &$(2,44)$ & $(32,16,-240,-708)$ & $1672224$ \\ 
$1$-$9$ &$V_{18}\subset \C P^{11}$  &$(2,44)$ & $(36,18,-306,-904)$ & $5529560$ \\ 
$1$-$10$ &$V_{22}\subset \C P^{13}$  &$(2,44)$ & $(44,22,-462,-1358)$ & $122507896$ \\ \hline
$1$-$4$ &$V(2,2,2)\subset \C P^6$  &$(2,58)$ & $(16,8,-56,-164)$ & $1920$ \\ 
$1$-$12$ &$V(4)\subset \C P(1^4,2)$  &$(2,58)$ & $(32,32,-192,-1364)$ & $208516$ \\ 
$1$-$14$ &$V(2,2)\subset \C P^5$  &$(2,58)$ & $(64,64,-896,-5796)$ & $3440828$ \\ \bottomrule
\end{tabular}
\end{table}
In Table $\ref{table:invariants}$, $e_i$ denotes the basis of $H^2(M,\Z)$. We refer the reader to see Section $\ref{sec:CubicForm}$ and \cite{Y22} for more details on the computations.

The author expects that Theorem $\ref{thm:main}$ is also true for the doubling Calabi-Yau $3$-folds with $\rho(M)\geqslant 3$. However, there are two difficulties in proving the assertion.
Firstly, it is hard to specify the generators of $H^2(M,\Z)$ for the doubling Calabi-Yau $3$-folds with $\rho(M)\geqslant 3$ and it requires a fairly number of computations. Secondly, we struggle with more difficult problem:
when $h^2(M)$ is lager than $2$, the $\lambda$-invariant can not be defined. Hence we have to consider more intrinsic invariants of cubic forms (for example, the Arnold $S$-invariants of cubic forms for compact complex
$3$-folds with $h^2(M)=3$).

This paper is organized as follows. Section $\ref{sec:CYconstruction}$ is devoted to reviewing some essential background.
In Section $\ref{sec:Diffeo}$, we recall Wall's theorem on the classification of closed, oriented, simply-connected smooth $6$-manifold with torsion free homology up to diffeomorphism.
For our purpose, we shall use Jupp's theorem on the invariants that determine the diffeomorphism classes of simply-connected compact complex $3$-folds. We also introduce the $\lambda$-invariant which can be defined on any
compact complex $3$-fold $M$ with $h^2(M)=2$. 
In the last section, we prove Theorem $\ref{thm:main}$ by calculating (i) the values of the cubic forms and (ii) the values of the $\lambda$-invariant with respect to some specific basis
of $H^2(M,\Z)$ for each doubling Calabi-Yau $3$-fold $M$ with symbol $\checkmark$ in Table $\ref{table:CY3Pic2}$. More detailed and further computations can be found in the paper \cite{Y22}.

\vskip 4pt

\noindent{\bf{Acknowledgement.}}
I would like to thank Professor Nam-Hoon Lee for answering patiently my questions on cubic forms and the $\lambda$-invariants of Calabi-Yau $3$-folds.
I also thank Professor Ryushi Goto and Dr. Mamoru Doi for asking an interesting question on diffeomorphism classes of Calabi-Yau $3$-folds (Question $1$).
This work was partially supported by JSPS KAKENHI Grant Number $18K13406$ and $22K03316$.
\vskip 4pt

\noindent{\bf{Declaration.}}
The author has no competing interests to declare that are relevant to the content of this article.

\section{Calabi-Yau constructions}\label{sec:CYconstruction}
\subsection{Algebro-geometric approach}
The smoothing problem of normal crossing variety was pioneered by R. Friedman in the early $1980$'s \cite{Fr83}.
He proved a smoothing theorem for normal crossing complex surfaces and showed that any $d$-semistable $K3$ surface $X$ has a family of
smoothings $\varpi: \X \to \D \subset \C$ with $K_\X=\mathcal O_\X$, where $\X$ is a $3$-dimensional complex manifold and $\varpi$ is a
proper map between $\X$ and a domain $\D$ in $\C$. This result was generalized by Kawamata and Namikawa to higher dimensional case
\cite{KN94}. They used $T^1$-lifting property for proving the unobstructedness of the deformation space of smooth Calabi-Yau $n$-folds
\cite{Ra92, Kaw92}. In particular we have to require the cohomological condition $H^{n-1}(X,\mathcal O_X)=0$ in order to carry out
$T^1$-lifiting property. Recall that a complex simple normal crossing (SNC for short) variety
$X=\bigcup_{i=1}^N Y_i$ is {\emph{ $d$-semistable}} if
\[
\left( \bigoplus_{i=1}^N \mathcal{I}_{Y_i}/\mathcal{I}_{Y_i}\mathcal{I}_{D} \right)^{\!\!*}
\simeq \, \mathcal O_D
\]
for $D=\mathrm{Sing}X$, where $\mathcal{I}_{Y_i}$ and $\mathcal I_D$ are the ideal sheaves of $X_i$ and $D$ in $X$ respectively.
If $N=2$ $\bigl(\text{i.e. }\, X=Y_1\cup Y_2 \bigr)$, then $X$ is $d$-semistable if and only if
\begin{equation}\label{condi:d-s.s}
\mathcal N_{D/Y_1}\otimes \mathcal N_{D/Y_2}\simeq \mathcal O_D
\end{equation}
where $\mathcal N_{D/Y_i}$ is the normal bundle of $D$ in $Y_i$. A degeneration $\varpi: \X \to \D$ of $X$ is {\emph{semistable}} if
the total space $\X$ is smooth and the central fiber $X_0:=\varpi^{-1}(0)\cong X$ is K\"ahler.
It is known that an SNC variety $X$ is $d$-semistable if $X$ is the central fiber in a semistable degeneration (see, Corollary $1.12$ in \cite{Fr83}).
Although the converse is not true in general, in the case of Calabi-Yau manifolds,
the theorem of Kawamata and Namikawa states the following.
\begin{theorem}[Kawamata-Namikawa]\label{thm:KN}
Let $  X=\bigcup_{i=1}^N Y_i$ be a proper simple normal crossing (SNC) variety with $\dim_{\C} X=n \geqslant 3$ satisfying the following conditions:
\begin{enumerate}
\item the dualizing sheaf is trivial, that is, $\omega_X \simeq \mathcal O_X$.
\item $H^{n-1}(X, \mathcal O_X)=0$ and $H^{n-2}(Y_i,\mathcal O_{Y_i})=0$ for any $i$.
\item $X$ is $d$-semistable.
\end{enumerate}
Then $X$ has a family of smoothings $\varpi:\X \to \Delta$ with the smooth total space.
\end{theorem}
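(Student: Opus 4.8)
The plan is to prove that a suitable logarithmic deformation functor of $X$ is unobstructed and then to extract a genuine smoothing from the resulting family. First I would equip $X$ with a logarithmic structure of semistable type: the hypothesis that $X$ is $d$-semistable (condition (3), in the form $\eqref{condi:d-s.s}$ when $N=2$) is exactly what is needed to glue the local log structures along $D=\operatorname{Sing}X$ into a global fine saturated log structure $X^{\dagger}$ for which the structure morphism $\pi\colon X^{\dagger}\to o^{\dagger}$ to the standard log point is log smooth. The sheaf of relative logarithmic differentials $\Omega^{1}_{X^{\dagger}/o^{\dagger}}$ is then locally free of rank $n$, its dual $\Theta^{\log}_{X}:=(\Omega^{1}_{X^{\dagger}/o^{\dagger}})^{\vee}$ (the logarithmic tangent sheaf) is locally free as well, and logarithmic deformations of $X$ are governed by the groups $H^{i}(X,\Theta^{\log}_{X})$, with first-order deformations in $H^{1}$ and obstructions in $H^{2}$.

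The main step is to show this log deformation functor is smooth, for which I would invoke the $T^{1}$-lifting criterion of Ran and Kawamata. Writing $A_{n}=\C[t]/(t^{n+1})$ and letting $X_{n}\to\operatorname{Spec}A_{n}$ range over logarithmic deformations, it suffices to prove that the relative module $T^{1}(X_{n}/A_{n})$ is free over $A_{n}$ of rank $\dim_{\C}H^{1}(X,\Theta^{\log}_{X})$ for every $n$. Here conditions (1) and (2) enter. The triviality $\omega_{X}\simeq\mathcal{O}_{X}$ furnishes a nowhere-vanishing section of $\Omega^{n}_{X^{\dagger}/o^{\dagger}}\simeq\omega_{X}$, and contraction with it yields a duality isomorphism $\Theta^{\log}_{X}\cong\Omega^{n-1}_{X^{\dagger}/o^{\dagger}}$; hence $T^{1}(X_{n}/A_{n})\cong H^{1}(X_{n},\Omega^{n-1}_{X_{n}^{\dagger}/A_{n}^{\dagger}})$ is a Hodge-theoretic piece of the relative logarithmic de Rham cohomology. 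The desired freeness over $A_{n}$ then follows from base change together with the $E_{1}$-degeneration of the relative logarithmic Hodge-to-de Rham spectral sequence for the proper log smooth family $X_{n}\to\operatorname{Spec}A_{n}$, the properness of $X$ guaranteeing the coherence and base-change statements. This Hodge-theoretic verification of $T^{1}$-lifting is the technical heart of the argument, and I expect it to be the main obstacle: everything hinges on the degeneration statement and on the duality supplied by condition (1).

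Granting unobstructedness, the functor is prorepresented by a smooth formal base $\operatorname{Spf}\C[[t_{1},\dots,t_{r}]]$ with $r=\dim_{\C}H^{1}(X,\Theta^{\log}_{X})$. Because $X$ is proper I would algebraize this formal family (or, in the analytic category, apply Grauert's theorem) to obtain an actual logarithmic deformation over a polydisk, and it remains to select a one-parameter direction along which the normal-crossing singularities are genuinely smoothed. Here I would use the sheaf $\mathcal{T}^{1}_{X}:=\mathcal{E}xt^{1}(\Omega^{1}_{X},\mathcal{O}_{X})$ of local first-order deformations: it is a line bundle on $D$, canonically $\bigotimes_{i}\mathcal{N}_{D/Y_{i}}$, so that $d$-semistability gives $\mathcal{T}^{1}_{X}\cong\mathcal{O}_{D}$ with a nowhere-vanishing global section. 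A deformation smooths $X$ precisely when its image under the natural map $\rho\colon T^{1}_{X}\to H^{0}(D,\mathcal{T}^{1}_{X})$ appearing in the local-to-global sequence $H^{1}(X,\Theta_{X})\to T^{1}_{X}\xrightarrow{\rho}H^{0}(D,\mathcal{T}^{1}_{X})\to H^{2}(X,\Theta_{X})$ is this nowhere-vanishing section. I would show $\rho$ is surjective by using the vanishings $H^{n-1}(X,\mathcal{O}_{X})=0$ and $H^{n-2}(Y_{i},\mathcal{O}_{Y_{i}})=0$ of condition (2), which control the relevant tangent cohomology through the duality above and force the connecting map out of $H^{0}(D,\mathcal{T}^{1}_{X})$ to annihilate the distinguished section.

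Finally, combining the existence of a first-order smoothing direction with the unobstructedness established above, I would integrate it and restrict the universal family to a general analytic disk $\Delta$ through the origin in that direction, obtaining $\varpi\colon\X\to\Delta$ with central fiber $X$ and smooth general fibers. \'Etale-locally at a $k$-fold crossing the family is the standard log smooth model $x_{1}\cdots x_{k}=t$, whose total space is smooth; hence $\X$ is smooth, which is the assertion of the theorem. By contrast with the Hodge-theoretic core, this last passage from unobstructed logarithmic deformations to an honest smoothing is comparatively formal once the local smoothing model and $d$-semistability are in hand.
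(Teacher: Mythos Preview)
The paper does not prove this theorem: Theorem~\ref{thm:KN} is quoted as background from Kawamata--Namikawa \cite{KN94} (see also the surrounding discussion and Remark), and no argument is supplied here. So there is no ``paper's own proof'' to compare against.

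That said, your sketch is essentially the strategy of the original Kawamata--Namikawa paper: endow $X$ with a log structure using $d$-semistability, identify the log tangent sheaf with $\Omega^{n-1}_{X^\dagger/o^\dagger}$ via the trivialization of $\omega_X$, prove unobstructedness of log deformations by the $T^1$-lifting criterion together with $E_1$-degeneration of the logarithmic Hodge--de~Rham spectral sequence, and finally exhibit a genuine smoothing direction by showing the relevant map to $H^0(D,\mathcal T^1_X)$ hits a nowhere-vanishing section. The cohomological vanishings in condition~(2) are used exactly as you indicate, and the local model $x_1\cdots x_k=t$ gives smoothness of the total space. Your outline is faithful to \cite{KN94}; the only caveat is that several of the steps you label ``I would show'' (notably the log $E_1$-degeneration over Artinian bases and the precise surjectivity onto the smoothing direction) are genuinely delicate in the original paper and require more than a line, but the architecture is correct.
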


\begin{remark}\rm
Originally it is assumed that $X$ is K\"ahler in \cite[Theorem $4.2$]{KN94}.
However we only need to assume that $X$ is a proper SNC variety. See Remark $2.9$ in \cite{HS19} for more details.
In particular, Hashimoto and Sano constructed infinitely many topological types (and hence diffeomorphic classes) of non-K\"ahler compact complex
$3$-fold with trivial canonical bundle (see, Theorem $1.2$ in \cite{HS19}).
See also \cite{FFR19} where the authors proved Theorem $\ref{thm:KN}$ under mild assumptions instead of $d$-semistability (3) and cohomological conditions (2).
Recently we gave a differential-geometric interpretation of Friedman's smoothability result on $d$-semistable $K3$ surfaces \cite[Theorem $5.10$]{Fr83} and extended it to cases where not all irreducible components of $X$ are K\"ahlerian or
$H^1(X,\mathcal O_X)$ does not vanish. We refer the reader \cite{DY22} for more details.
\end{remark}

Later on, Lee investigated log  deformations of SNC varieties consisting of two irreducible components. 
Moreover, he described the Picard group of a Calabi-Yau $n$-fold with $n\geqslant 3$ obtained by Theorem $\ref{thm:KN}$. 

\vskip 4pt

For our purpose, we focus on complex three dimensional case and the case where the central fiber has only two components, that is,
$X=Y_1\cup Y_2$. Let $D:=Y_1\cap Y_2$. Then $X$ is projective (and so is K\"ahler) if and only if there are ample divisors $H_1$
on $Y_1$ and $H_2$ on $Y_2$ such that $H_1\big|_D$ is linearly equivalent to $H_2\big|_D$.
In particular, the above conditions $(1)$--$(3)$ in Theorem $\ref{thm:KN}$ are equivalent to the following conditions: 
\begin{enumerate}
\item[$(1)'$] $D$ is an anticanonical divisor on each $Y_i$: $D\in |-K_{Y_i}|$.
\item[$(2)'$] $H^1(Y_i, \mathcal O_{Y_i})=H^1(D, \mathcal O_D)=0$.
\end{enumerate}
By condition $(1)'$ and the adjunction formula, we have 
\[
K_D=(K_X\otimes[D])|_D=[-D]|_D\otimes[D]|_D \simeq \mathcal O_D.
\]
Thus condition $(2)'$ yields that $D$ is a $K3$ surface. As we have seen in $\eqref{condi:d-s.s}$,
we require 
\begin{enumerate}
\item[$(3)'$] $\mathcal N_{D/Y_1}\otimes \mathcal N_{D/Y_2}\simeq \mathcal O_D$
\end{enumerate}
for $X$ to be $d$-semistable. Then by Theorem $\ref{thm:KN}$,
$X$ is smoothable to a complex $3$-fold $M_t:=\varpi^{-1}(t)$ with trivial canonical bundle
and $H^i(M_t, \mathcal O_{M_t})=H^0(M_t, \Omega_{M_t}^i)=0$ for $i=1,2$. 
Especially $M_t$ is a Calabi-Yau $3$-fold.
Lee gave the following description of $H^2(M_t,\Z)$ in terms of the normal crossing central fiber.
\begin{theorem}[Corollary $8.2$ in \cite{Lee10}]\label{thm:NamHoon}
Let $M_t$ be the Calabi-Yau $3$-fold obtained in the above and $k=\rank \left(H^2(M_t, \Z ) \right)$. 
Assume that there are some elements $\alpha_1, \ldots , \alpha_k$ in
\[
G:=\set{(\ell, \ell')\in H^2(Y_1,\Z)\times H^2(Y_2,\Z) | i_1^*(\ell)=i_2^*(\ell')}
\]
and $\beta_1, \ldots \beta_k$ in 
\[
\set{(\tilde \ell, \tilde \ell')\in H^4(Y_1,\Z)\times H^4(Y_2,\Z) | i_1^*(\tilde \ell)=i_2^*(\tilde \ell')}
\]
such that the $k\times k$ matrix $(\alpha_i \cdot \beta_j)$ is unimodular, where $i_a: D\to Y_a$ $(a=1,2)$ is the inclusion map.
Then
\begin{equation}\label{eq:2ndCohomology}
H^2(M_t, \Z) \cong G/\!\braket{D, -D}
\end{equation}
up to torsion with the cup products being preserved.
\end{theorem}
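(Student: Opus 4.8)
The plan is to compare the integral cohomology of the smooth fibre $M_t$ with that of the central fibre $X_0=Y_1\cup Y_2$ by means of a retraction map and the Clemens–Schmid exact sequence of the semistable degeneration $\varpi:\X\to\D$, everything being understood up to torsion. First I would compute $H^2(X_0,\Z)$ via the Mayer–Vietoris sequence
\[
H^1(Y_1,\Z)\oplus H^1(Y_2,\Z)\to H^1(D,\Z)\to H^2(X_0,\Z)\to H^2(Y_1,\Z)\oplus H^2(Y_2,\Z)\xrightarrow{\,i_1^*-i_2^*\,}H^2(D,\Z).
\]
Condition $(2)'$ gives $H^1(Y_i,\O)=H^1(D,\O)=0$, so $b_1(Y_i)=b_1(D)=0$ and the two left-hand terms vanish up to torsion; hence $H^2(X_0,\Z)\cong\ker(i_1^*-i_2^*)=G$ up to torsion. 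The same sequence in degrees $4$ and $6$ identifies $H^4(X_0,\Z)$ with the group of compatible pairs in $H^4(Y_1,\Z)\times H^4(Y_2,\Z)$ and gives $H^6(X_0,\Z)\cong\Z$.

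Next I would bring in the retraction $r:M_t\hookrightarrow\X\simeq X_0$ and the induced ring homomorphism $r^*:H^*(X_0,\Z)\to H^*(M_t,\Z)$, together with the Clemens–Schmid sequence. Its content (the local invariant cycle theorem) is that, rationally, $r^*$ in degree $k$ surjects onto the monodromy invariants $\ker N$ and has kernel the image of the Gysin map $H_{2n-k}(X_0)\to H^k(X_0)$; for $n=3$, $k=2$ this Gysin source is $H_4(X_0)$, which for the single irreducible $K3$ double locus $D$ should contribute at most rank one. I would then check that the distinguished class $([D],-[D])\in G$ lies in $\ker r^*$: it is represented by $\O_{\X}(Y_1)|_{X_0}$, whose restrictions to $Y_1$ and $Y_2$ are $\mathcal N_{Y_1/\X}$ and $\O_{Y_2}(D)$, hence the pair $(-[D],[D])$ after invoking $d$-semistability $(3)'$, whereas $\O_{\X}(Y_1)|_{M_t}=\O_{M_t}$ because $M_t$ is disjoint from $Y_1$. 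Thus $\braket{D,-D}\subseteq\ker r^*$.

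The unimodularity hypothesis is what promotes this to an isomorphism on the nose. Since $r^*$ is a ring map it preserves cup products and carries $H^6(X_0,\Z)\cong\Z$ isomorphically onto $H^6(M_t,\Z)\cong\Z$, so $\bigl(r^*\alpha_i\cdot r^*\beta_j\bigr)_{M_t}=(\alpha_i\cdot\beta_j)_{X_0}$. As this matrix is unimodular and Poincaré duality makes the pairing $H^2(M_t)\times H^4(M_t)\to H^6(M_t)\cong\Z$ perfect up to torsion, the classes $r^*\alpha_1,\dots,r^*\alpha_k$ must form a $\Z$-basis of $H^2(M_t,\Z)$ modulo torsion; in particular $r^*$ is surjective up to torsion and $\ker N$ is all of $H^2(M_t)$. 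A rank count then closes the argument: surjectivity forces $\rank G\geq k$, the nonzero class $\braket{D,-D}\subseteq\ker r^*$ forces $\rank G\geq k+1$, and the Clemens–Schmid bound on the Gysin image forces $\rank\ker r^*\leq 1$. Hence $\ker r^*=\braket{D,-D}$ up to torsion and $r^*$ descends to the asserted isomorphism $G/\braket{D,-D}\xrightarrow{\sim}H^2(M_t,\Z)$, with cup products preserved because $r^*$ is a ring homomorphism.

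The step I expect to be the main obstacle is the passage from the rational Hodge-theoretic information furnished by Clemens–Schmid to the integral statement ``up to torsion'': concretely, pinning down $\ker r^*$ as \emph{exactly} the rank-one group $\braket{D,-D}$ (which requires bounding the Gysin image associated with the $K3$ double locus) and ensuring that the unimodular pairing really yields an integral, rather than merely rational, basis of $H^2(M_t,\Z)$ modulo torsion. Handling these torsion and integrality subtleties, rather than the formal skeleton of the comparison, is where the genuine work lies.
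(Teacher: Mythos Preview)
This theorem is not proved in the present paper; it is quoted verbatim as Corollary~8.2 of Lee~\cite{Lee10} and then used as a black box in the computations of Section~\ref{sec:CubicForm}. There is therefore no proof in the paper against which to compare your sketch.

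That said, your outline is essentially the argument one finds in Lee's work: Mayer--Vietoris on $X_0=Y_1\cup_D Y_2$ to identify $H^2(X_0,\Z)$ with $G$ up to torsion, the specialisation map $r^*:H^*(X_0)\to H^*(M_t)$ together with the Clemens--Schmid/local invariant cycle machinery to control its image and kernel, the observation that the pair $([D],-[D])$ comes from $\mathcal O_{\X}(Y_1)|_{X_0}$ and hence lies in $\ker r^*$, and the unimodularity hypothesis to force the $r^*\alpha_i$ to be an integral basis of $H^2(M_t,\Z)$ modulo torsion via Poincar\'e duality. The rank count you give is the right way to close the argument. The caveats you raise at the end---passing from rational Clemens--Schmid information to the integral statement, and primitivity of $(D,-D)$---are the genuine technical points, but since the conclusion is only claimed \emph{up to torsion} most of them are absorbed; the substantive remaining item is bounding $\rank\ker r^*\le 1$, which does require identifying the Gysin image carefully, and that is indeed where Lee does the real work in~\cite{Lee10}.
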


\subsection{Differential-geometric approach}\label{sec:DY}
In \cite{DY14}, we gave a differential-geometric construction of Calabi-Yau $3$-folds building upon the work of Kovalev \cite{Kov03}
and Kovalev-Lee \cite{KL11}. Ingredients in our construction are admissible pairs.
Here an admissible pair $(Y,D)$ consists of a $3$-dimensional compact 
K\"ahler manifold $Y$ and a smooth anticanonical $K3$ divisor $D$. 
More precisely $(Y, D)$ is said to be an admissible pair if the following conditions hold:
\begin{enumerate}
\item[(a)] $Y$ is a $3$-dimensional compact K\"ahler manifold.
\item[(b)] $D$ is a smooth anticanonical divisor on $Y$.
\item[(c)] The normal bundle $\mathcal N_{Y/D}$ is trivial.
\item[(d)] $Y$ and $Y\setminus D$ are simply-connected.
\end{enumerate}
Moreover one can construct admissible pairs (which are called {\emph{of Fano type}}) using a Fano $3$-fold with a smooth
anticanonical $K3$ divisor.
\begin{theorem}[Proposition $6.42$ in \cite{Kov03}]\label{thm:Kov}
Let $V$ be a Fano $3$-fold, $D\in|-K_V|$ a $K3$ surface, and let $C$ be a smooth curve in $D$ representing the self-intersection
class of $D\cdot D$. Let $\varphi: Y=\mathrm{Bl}_C(V)\dasharrow V$ be the blow-up of $V$ along the curve $C$.
Taking the proper transform of $D$ under the blow-up $\varphi$, we still denote it by $D$. Then $(Y,D)$ is an admissible pair.
\end{theorem}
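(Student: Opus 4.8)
The plan is to verify the four defining conditions (a)--(d) of an admissible pair for $(Y,\tilde D)$, where I write $\varphi: Y=\mathrm{Bl}_C(V)\to V$ for the blow-up with exceptional divisor $E$ (a $\P^1$-bundle over $C$) and $\tilde D\subset Y$ for the proper transform of $D$. Condition (a) is immediate: $V$ is Fano, hence smooth projective and K\"ahler, and the blow-up of a compact K\"ahler manifold along a smooth submanifold is again compact K\"ahler, so $Y$ is a $3$-dimensional compact K\"ahler manifold.

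For (b) and (c) the main input is the geometry of the strict transform. Since $D$ is smooth and $C\subset D$ is a smooth curve, $C$ is a Cartier divisor on the surface $D$; because blowing up along a Cartier divisor is an isomorphism, the strict transform satisfies $\tilde D\cong \mathrm{Bl}_C D\cong D$ and $\varphi|_{\tilde D}:\tilde D\to D$ is an isomorphism, so $\tilde D$ is a smooth $K3$ surface. Next I would use the two standard formulas for blowing up a $3$-fold along a smooth curve, namely $K_Y=\varphi^*K_V+E$ and $\varphi^*D=\tilde D+E$ (the coefficient of $E$ is the multiplicity $\mathrm{mult}_C D=1$, as $D$ is smooth along $C$). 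Combining them gives $-K_Y=\varphi^*(-K_V)-E=\varphi^*D-E=\tilde D$, so $\tilde D\in|-K_Y|$ is a smooth anticanonical divisor, establishing (b). For (c), I would restrict $\mathcal{O}_Y(\tilde D)=\varphi^*\mathcal{O}_V(D)\otimes\mathcal{O}_Y(-E)$ to $\tilde D$. Under the isomorphism $\varphi|_{\tilde D}$ one has $\varphi^*\mathcal{O}_V(D)|_{\tilde D}\cong\mathcal N_{D/V}$ and $\mathcal{O}_Y(E)|_{\tilde D}\cong\mathcal{O}_D(C)$, because $E\cap\tilde D$ maps isomorphically to $C$. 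The hypothesis that $C$ represents the self-intersection class $D\cdot D$ means precisely $\mathcal{O}_D(C)\cong\mathcal N_{D/V}$, so $\mathcal N_{\tilde D/Y}=\mathcal{O}_Y(\tilde D)|_{\tilde D}\cong\mathcal N_{D/V}\otimes\mathcal{O}_D(-C)\cong\mathcal{O}_D$ is trivial, which is (c).

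For (d) I would treat $Y$ and $Y\setminus\tilde D$ separately. Simple connectivity of $Y$ follows from that of $V$: any Fano manifold is simply connected, and the blow-down $\varphi$ along a smooth center induces an isomorphism on $\pi_1$ (its fibers are connected), so $\pi_1(Y)\cong\pi_1(V)=1$. For the complement, since $\mathcal N_{\tilde D/Y}$ is trivial, $\tilde D$ has a tubular neighborhood $N\cong\tilde D\times\Delta$, and van Kampen applied to $Y=(Y\setminus\tilde D)\cup N$ with $N\cap(Y\setminus\tilde D)\simeq\tilde D\times S^1$ (whose $\pi_1=\Z$ is generated by a meridian $\mu$, as $\tilde D$ is a simply connected $K3$) together with $\pi_1(Y)=1=\pi_1(\tilde D)$ shows that $\pi_1(Y\setminus\tilde D)$ is normally generated by $\mu$. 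It then remains to kill $\mu$. Here I would use the exceptional curves: a fiber $\ell\cong\P^1$ of $E\to C$ satisfies $\ell\cdot\tilde D=\ell\cdot(\varphi^*D-E)=1$, meeting $\tilde D$ transversally in the single point $\ell\cap(E\cap\tilde D)$, so $\ell\setminus\tilde D\cong\C$ is a contractible disc lying in $Y\setminus\tilde D$ and bounding a meridian of $\tilde D$; hence $\mu$ is nullhomotopic in $Y\setminus\tilde D$. As $\tilde D$ is connected, all meridians are conjugate, so the normal closure of $\mu$ is all of $\pi_1(Y\setminus\tilde D)$, giving $\pi_1(Y\setminus\tilde D)=1$.

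The main obstacle is the simple connectivity of the complement $Y\setminus\tilde D$ in condition (d): unlike (a)--(c), it is not a formal consequence of the blow-up formulas but requires the geometric observation that the exceptional $\P^1$'s provide explicit discs contracting the meridian. A secondary point to pin down is the precise meaning of ``$C$ represents $D\cdot D$'' as the equivalence $\mathcal{O}_D(C)\cong\mathcal N_{D/V}$ in $\mathrm{Pic}(D)$, since this single identity is exactly what forces the normal bundle in (c) to be trivial and simultaneously matches $E\cap\tilde D$ with $C$ in the computation.
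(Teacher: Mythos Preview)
The paper does not provide its own proof of this theorem; it is simply quoted from Kovalev's paper (Proposition~6.42 in \cite{Kov03}) as background input for the doubling construction. So there is nothing to compare against directly.

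That said, your argument is correct and is essentially the standard one. The verifications of (a)--(c) via the blow-up formulas $K_Y=\varphi^*K_V+E$ and $\varphi^*D=\tilde D+E$, together with the identification $\mathcal{O}_D(C)\cong\mathcal N_{D/V}$ coming from the hypothesis on $C$, are exactly right. Your treatment of (d) is also sound: the key geometric input, namely that each exceptional fibre $\ell\cong\P^1$ satisfies $\ell\cdot\tilde D=\ell\cdot(\varphi^*D-E)=0-(-1)=1$ and hence furnishes a disc $\ell\setminus\{\text{pt}\}\cong\C$ in $Y\setminus\tilde D$ bounding a meridian, is precisely the mechanism Kovalev uses. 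One small cosmetic remark: you could shorten (d) by noting that $Y\setminus\tilde D$ deformation retracts onto $V\setminus D$ union the exceptional divisor glued appropriately, but the van~Kampen route you wrote is clean and self-contained.
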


Let $(Y,D)$ be an admissible pair. 
Let $\mathcal N=\mathcal N_{D/Y}$ denote the normal bundle.
Then there exists a Calabi-Yau structure on $Y\setminus D$ asymptotic to a cylindrical Calabi-Yau structure on $\mathcal N\setminus D$
due to the work of \cite{HHN15}. Hence one can obtain a compact manifold $M$ by gluing two copies of $Y\setminus D$ along their cylindrical ends. Furthermore we have the following.
\begin{theorem}[Corollary $3.12$ in \cite{DY14}]\label{thm:DY}
Let $(Y,D)$ be an admissible pair of Fano type with $\dim_\C Y=3$. Then we can construct the Calabi-Yau $3$-fold by gluing two copies
of $Y\setminus D$ along their cylindrical ends. 
\end{theorem}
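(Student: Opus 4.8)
The plan is to realize the doubled manifold $M$ by a gluing-and-perturbation argument in the spirit of Kovalev \cite{Kov03} and Kovalev--Lee \cite{KL11}, producing its Calabi--Yau structure from the two asymptotically cylindrical pieces. First I would fix the asymptotic model. Since $(Y,D)$ is admissible, conditions (a)--(c) together with \cite{HHN15} provide a complete Ricci-flat K\"ahler structure $(\om,\Om)$ on $Y\setminus D$ that is asymptotically cylindrical: the triviality of $\mathcal{N}=\mathcal{N}_{D/Y}$ identifies a punctured tubular neighbourhood of $D$ with the half-cylinder $\R_{>0}\times S^1\times D$, and $(\om,\Om)$ converges exponentially to the product cylindrical structure whose cross-section is $S^1\times D$, with $D$ carrying its Ricci-flat K3 (hyperk\"ahler) metric and holomorphic volume form $\Om_D$. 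Writing $t$ for the cylinder parameter and $\theta$ for the $S^1$-coordinate, the limiting holomorphic $3$-form is a multiple of $(dt+i\,d\theta)\wedge\Om_D$. Taking two copies $(Y\setminus D)_1$, $(Y\setminus D)_2$, I glue their ends by an isometry $F$ reversing the cylinder coordinate ($t\mapsto -t$) and reversing $S^1$, combined with a hyperk\"ahler rotation of the common cross-section $D$ chosen so that the two asymptotic Calabi--Yau data agree under $F$ (the cylinder flip sends $(dt+i\,d\theta)$ to its negative, which the rotation of $D$ compensates). Such a matching exists precisely because both ends are modelled on the \emph{same} K3 surface $D$ with trivial normal bundle; this is where condition (c), equivalently the $d$-semistability \eqref{condi:d-s.s}, enters. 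Truncating both copies at $t=T$ and identifying the collars via $F$ yields a compact, oriented smooth $6$-manifold $M=M_T$.

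Second, I would build an approximate Calabi--Yau structure. Using cutoff functions supported on the neck, I interpolate between the two exponentially cylindrical structures to obtain a global pair $(\om_T,\Om_T)$ on $M_T$ with $\Om_T$ a nowhere-vanishing closed $3$-form and $\om_T$ a K\"ahler form, whose failure to be torsion-free---measured for instance by $\Norm{\om_T^3 - c\,\Om_T\wedge\overline{\Om_T}}$ in a suitable weighted H\"older norm---is $O(e^{-\delta T})$ for some $\delta>0$.

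Third, I would perturb $(\om_T,\Om_T)$ to an honest Ricci-flat K\"ahler structure. The equation to solve is the complex Monge--Amp\`ere equation on $M_T$, which I would treat by a fixed-point/implicit-function argument in weighted H\"older spaces; the crucial input is that the relevant Laplace-type linearization is uniformly invertible as $T\to\infty$, its bounded kernel and cokernel being governed by the cohomology of the neck and vanishing by the conditions $H^1(Y,\mathcal{O}_Y)=H^1(D,\mathcal{O}_D)=0$ (cf. $(1)'$--$(3)'$ and Theorem \ref{thm:KN}). For $T$ large this yields a genuine Calabi--Yau structure on $M$. To confirm $M$ is a Calabi--Yau $3$-fold in the stated sense, I verify: $\pi_1(M)=1$ by van Kampen applied to the neck decomposition together with condition (d); the canonical bundle is trivial and $c_1(M)=0$ because $\Om$ is a globally defined nowhere-vanishing holomorphic $(3,0)$-form; and $H^1(M,\mathcal{O}_M)=H^2(M,\mathcal{O}_M)=0$, equivalently that the holonomy is exactly $\SU(3)$, which follows from the Beauville--Bogomolov decomposition once $\pi_1(M)=1$ rules out flat or lower-dimensional hyperk\"ahler factors.

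The step I expect to be the main obstacle is this final perturbation: establishing the uniform-in-$T$ invertibility of the linearized operator on the long-necked manifold and running the nonlinear iteration with exponentially small error. This is the analytic heart of any twisted connected sum or doubling argument and rests on the weighted elliptic analysis of asymptotically cylindrical manifolds together with the vanishing of the neck obstruction spaces; once it is in place, letting $T\to\infty$ produces the desired Calabi--Yau $3$-fold $M$.
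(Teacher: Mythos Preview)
The paper does not prove this theorem; it is simply quoted as Corollary~3.12 of \cite{DY14}, with no argument given beyond the surrounding prose describing admissible pairs and the existence of asymptotically cylindrical Ricci-flat K\"ahler structures via \cite{HHN15}. So there is no in-paper proof to compare against.

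Your outline is a reasonable sketch of the actual argument in \cite{DY14} and the Kovalev--Lee framework it builds on: asymptotically cylindrical Calabi--Yau structure on $Y\setminus D$, truncation and gluing along the cylindrical necks with a hyperk\"ahler rotation of the $K3$ cross-section to match the holomorphic volume forms, cut-off to an approximate structure with $O(e^{-\delta T})$ error, and a weighted implicit-function/Monge--Amp\`ere perturbation to a genuine torsion-free $\SU(3)$-structure. Two small corrections: first, condition~(c) (triviality of $\mathcal N_{D/Y}$) is \emph{strictly stronger} than the $d$-semistability condition \eqref{condi:d-s.s}, not equivalent to it---for two copies of the same $Y$, \eqref{condi:d-s.s} would only give $\mathcal N_{D/Y}^{\otimes 2}\cong\mathcal O_D$, whereas the gluing needs an actual trivialization to identify the cylindrical end with $\R_{>0}\times S^1\times D$. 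Second, the linearized obstruction analysis in the doubling case is governed not by $H^1(Y,\mathcal O_Y)$ directly but by the harmonic theory on the cylindrical ends and the matching condition on the $K3$ cross-section; in \cite{DY14} the key point is that using two copies of the \emph{same} $(Y,D)$ makes the matching automatic (no moduli condition on the $K3$ periods, unlike the general twisted connected sum), which is what justifies the word ``doubling.'' With those adjustments your plan matches the cited source.
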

We call the resulting manifold $M$ the {\em{doubling}} Calabi-Yau $3$-fold. 
Combining Theorems $\ref{thm:Kov}$ and $\ref{thm:DY}$, we can construct Calabi-Yau $3$-folds systematically
 starting from Fano $3$-folds.
 \begin{proposition}\label{prop:DY}
 Let $M$ be the doubling Calabi-Yau $3$-fold obtained by an admissible pair of Fano type.
 Then the number of distinct topological types of $(h^{1,1}(M), h^{2,1}(M))$ is $61$.
 \end{proposition}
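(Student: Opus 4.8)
The plan is to reduce the statement to a finite computation over the classification of smooth Fano $3$-folds. The first step is to record how the Hodge numbers of the doubling $M$ depend on the underlying Fano $3$-fold $V$. Starting from the admissible pair $(Y,D)$ with $Y=\mathrm{Bl}_C(V)$ as in Theorem~\ref{thm:Kov}, one has $\rho(Y)=\rho(V)+1$; moreover the curve $C\subset D$ represents $D\cdot D$, so by adjunction on the $K3$ surface $D$ its genus satisfies $2g(C)-2=(-K_V)^3$, whence $\chi(C)=K_V^3$ and $\chi(Y)=\chi(V)+K_V^3$. Gluing two copies of $Y\setminus D$ along their cylindrical ends and computing $b_2(M)$ via Lee's description of $H^2(M,\Z)$ (Theorem~\ref{thm:NamHoon}) and a Mayer--Vietoris argument gives $h^{1,1}(M)=\rho(V)+1$; since $\chi(M)=2\bigl(\chi(Y)-\chi(D)\bigr)=2\bigl(\chi(V)+K_V^3-24\bigr)$ and $\chi(M)=2\bigl(h^{1,1}(M)-h^{2,1}(M)\bigr)$ for a Calabi--Yau $3$-fold, together with $\chi(V)=2+2\rho(V)-2h^{1,2}(V)$ for $V$ Fano, one obtains
\[
h^{1,1}(M)=\rho(V)+1,\qquad h^{2,1}(M)=(-K_V^3)+2\,h^{1,2}(V)-\rho(V)+23 .
\]
This is exactly Proposition~$5.2$ of \cite{DY14}; the point to extract is that the pair $(h^{1,1}(M),h^{2,1}(M))$ depends only on the three numerical invariants $\rho(V)$, $-K_V^3$ and $h^{1,2}(V)$, independently of the choices of $D$ and $C$.

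The second step is to observe that an admissible pair of Fano type exists for every smooth Fano $3$-fold: each such $V$ carries a smooth anticanonical $K3$ divisor $D$ and the class $D\cdot D$ is represented by a smooth curve in $D$, so Theorem~\ref{thm:Kov} yields $(Y,D)$ and Theorem~\ref{thm:DY} the doubling $M$. Hence, as far as Hodge numbers are concerned, the doubling Calabi--Yau $3$-folds of Fano type are indexed by the deformation families of smooth Fano $3$-folds. By the Iskovskikh--Mori--Mukai classification \cite{Is77,MM81,MM03} there are precisely $105$ such families, and the invariants $\rho(V)$, $-K_V^3$, $h^{1,2}(V)$ are tabulated for each (for instance in the database \cite{Fano}). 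Substituting these into the two formulas above produces a list of $105$ pairs $(h^{1,1}(M),h^{2,1}(M))$, and one then simply counts the distinct entries: carrying this out --- the content of Figure~$1$ --- gives exactly $61$, which is the assertion.

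I expect no conceptual obstacle; the proof is essentially bookkeeping once the formulas of the first step are in place. The care required is twofold: (i) making sure the Mayer--Vietoris computation of $b_2(M)$ and $b_3(M)$ from \cite{DY14} is valid uniformly over all families --- in particular that the unimodularity hypothesis in Theorem~\ref{thm:NamHoon} can be arranged in each case, so that $H^2(M,\Z)\cong G/\braket{D,-D}$ really has rank $\rho(V)+1$; and (ii) transcribing the $105$-row table of Fano invariants and performing the arithmetic without error. A convenient sanity check on the latter is to re-derive a few rows by hand, e.g.\ $V=\P^3$ ($-K_V^3=64$, $h^{1,2}=0$, $\rho=1$) and the quartic $V(4)\subset \P^4$ ($-K_V^3=4$, $h^{1,2}=30$, $\rho=1$), both of which the formulas send to $(2,86)$, matching Table~\ref{table:CY3Pic2}.
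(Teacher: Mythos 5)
Your proposal is correct and follows the same route as the paper, which simply defers to Figure~$1$ and Tables~$6.1$--$6.5$ of \cite{DY14}: one computes $(h^{1,1}(M),h^{2,1}(M))$ for each of the $105$ Fano families via the formulas of Proposition~$5.2$ in \cite{DY14} and counts the distinct pairs. The only difference is that you actually derive the formulas $h^{1,1}(M)=\rho(V)+1$ and $h^{2,1}(M)=-K_V^3+2h^{1,2}(V)-\rho(V)+23$ (which check against every row of Table~$1$) rather than citing them, which makes your write-up more self-contained than the paper's one-line proof.
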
 
\begin{proof}
See Figure $1$ and Tables $6.1$--$6.5$ in \cite{DY14}.
\end{proof}

\section{Diffeomorphism classes of three dimensional Calabi-Yau manifolds}\label{sec:Diffeo}
In the rest of this paper, we shall consider the diffeomorphism classes of $3$-dimensional Calabi-Yau manifolds.
Hence our main concern is to distinguish given Calabi-Yau $3$-folds by their diffeomorphism type.
A good way to achieve this is to use the classification of closed simply-connected $6$-manifolds which is very complete.
Remark that if $M$ and $N$ are homotopy equivalent through a map which preserves the characteristic classes of the tangent bundle,
then they are diffeomorphic. This technique also works for $5$-manifolds, but it is false in real dimension $4$ and $7$.
In a word, two closed simply-connected smooth $6$-manifolds that are homeomorphic are necessarily diffeomorphic.
The spin case is due to C.T.C Wall \cite{Wa66} where he described the invariants that determine the diffeomorphism classes of simply-connected,
spin, oriented, closed $6$-manifold with torsion free homology. 
The following characterization is due to Jupp \cite{J73}, that is,
we can classify any compact simply-connected complex $3$-folds $M$ up to diffeomorphism by 

\begin{enumerate}
\item[(i)] a symmetric trilinear form (cubic form)
\begin{align} \label{Invariants}
\begin{split}
\mu: H^2(M,\Z)\otimes H^2(M,\Z)\otimes H^2(M,\Z) & ~~ \longrightarrow ~~H^6(M,\Z)\cong \Z \\
(x,y,z)& ~~ \longmapsto ~~ x\cup y\cup z
\end{split}
\end{align}
where $\cup$ denotes the cup product of differential forms, and
\vskip 4pt

\item[(ii)] the Chern classes of $M$.
\end{enumerate}
See \cite{OkVa95, KW14, BI15} for further details. 
In Section $\ref{sec:CubicForm}$, we pick up $8$ doubling Calabi-Yau $3$-folds having the overlapping Hodge numbers $(h^{1,1},h^{2,1})$ from Table $\ref{table:CY3Pic2}$ and prove that they are not diffeomorphic.
Firstly we calculate the values of the cubic forms of the doubling Calabi-Yau $3$-folds and confirm that the values $\eqref{Invariants}$ for each are different. However, as we mentioned in Section $\ref{sec:Intro}$,
we still can not conclude that there is no isomorphism between their cubic forms because the values of cubic forms may change if we use different basis of $H^2(M,\Z)$.
In order to solve this problem, we secondly compute the following {\emph{$\lambda$-invariant}} 
to distinguish doubling Calabi-Yau $3$-folds with Picard number two up to diffeomorphism.

Let $M$ be a compact complex $3$-fold with $h^2(M)=2$ whose second Chern class $c_2(M)$ is not zero in
\[
H^4(M,\Z)_f=H^4(M,\Z)/H^4(M,\Z)_t
\]
where $H^4(M,\Z)_t$ denotes the torsion part of $H^4(M,\Z)$. Then the subgroup
\begin{equation}\label{def:SG}
\Set{\ell \in H^2(M,\Z)_f | c_2(M)\cdot \ell =0}
\end{equation}
of $H^2(M,\Z)_f$ is generated by a single element and we denote it by $m$. In \cite{Lee20}, Lee defined the $\lambda$-invariant of $M$ by
\[
\lambda(M):=\norm{m^3}
\]
in order to distinguish whether given two compact complex $3$-folds $M$ and $M'$ are homeomorphic (i.e. diffeomorphic) or not.
Namely we deduce the following.
\begin{proposition}[\cite{Lee20}, Section $8$]\label{prop:lambda_inv}
Let $M$ and $M'$ be two Calabi-Yau $3$-folds with $h^2(M)=h^2(M')=2$.
We assume that their second Chern classes are not zero. If the values of the $\lambda$-invariants for $M$ and $M'$ are different, then the cubic forms of $M$ and $M'$ are not isomorphic.
In particular, $M$ and $M'$ are not diffeomorphic (or equivalently, non-homeomorphic).
\end{proposition}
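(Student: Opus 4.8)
The plan is to establish Proposition~\ref{prop:lambda_inv} directly from the definition of the $\lambda$-invariant together with Jupp's classification criterion recalled in Section~\ref{sec:Diffeo}. First I would observe that $\lambda(M)$ is, by construction, an invariant of the pair consisting of the cubic form $\mu$ on $H^2(M,\Z)_f$ and the distinguished class $c_2(M)\in H^4(M,\Z)_f$, \emph{not} of any particular choice of basis: indeed, the subgroup $\Set{\ell \in H^2(M,\Z)_f | c_2(M)\cdot \ell =0}$ is defined intrinsically in terms of the pairing between $H^2$ and $H^4$ and the class $c_2(M)$, and a rank-two lattice quotiented by this subgroup being infinite cyclic means its generator $m$ is well defined up to sign; hence $\norm{m^3}=\norm{\mu(m,m,m)}$ depends only on the isomorphism class of $(\mu, c_2(M))$.

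Next I would make precise what "isomorphism of cubic forms" means in this setting. An isomorphism between the cubic forms of $M$ and $M'$ is a group isomorphism $\phi: H^2(M,\Z)_f \to H^2(M',\Z)_f$ with $\mu'(\phi x,\phi y,\phi z)=\mu(x,y,z)$ for all $x,y,z$; and in the context of diffeomorphism classification one requires in addition that $\phi$ be compatible with the Chern classes, in particular $\phi$ carries $c_2(M)$ to $\pm c_2(M')$ under the induced map on $H^4$ (the dual lattice map). The contrapositive I want is: if there is such a $\phi$, then $\lambda(M)=\lambda(M')$. Given $\phi$, it carries the subgroup \eqref{def:SG} for $M$ isomorphically onto the corresponding subgroup for $M'$, because $c_2(M')\cdot \phi(\ell) = \pm\,c_2(M)\cdot \ell$; therefore $\phi$ sends a generator $m$ of the former to $\pm m'$, a generator of the latter, and then $\norm{(m')^3}=\norm{\mu'(\pm\phi m,\pm\phi m,\pm\phi m)}=\norm{\mu(m,m,m)}=\norm{m^3}$, i.e. $\lambda(M)=\lambda(M')$. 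Taking the contrapositive gives the first assertion: if $\lambda(M)\neq\lambda(M')$, no such basis-respecting isomorphism of cubic forms can exist.

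For the final clause I would invoke Jupp's theorem as stated: the diffeomorphism type of a simply-connected compact complex $3$-fold with torsion-free homology is determined by the cubic form $\mu$ on $H^2$ together with the Chern classes. Concretely, if $M$ and $M'$ were diffeomorphic, a diffeomorphism would induce an isomorphism $H^2(M,\Z)\to H^2(M',\Z)$ (after orienting appropriately so that the degree is $+1$, or else replacing $\phi$ by $-\phi$ which does not affect $\lambda$) intertwining the cubic forms and matching $c_2$; but we have just shown $\lambda(M)\neq\lambda(M')$ rules this out. Hence $M$ and $M'$ are not diffeomorphic, and since for such $6$-manifolds homeomorphic implies diffeomorphic, they are also non-homeomorphic. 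One should also note $M,M'$ are indeed simply-connected with torsion-free $H^*$, so that Jupp's criterion applies — for the doubling Calabi-Yau $3$-folds this follows from condition (d) of admissibility and the construction, but for the general statement of the proposition it is part of the standing hypotheses on Calabi-Yau $3$-folds of the type under consideration.

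The main obstacle is the sign and orientation bookkeeping in the middle step: one must verify that the ambiguity in the generator $m$ (only defined up to $\pm 1$) and the possible orientation reversal in a diffeomorphism both wash out when one passes to $\norm{m^3}$, so that $\lambda$ is genuinely a well-defined diffeomorphism invariant; once that is checked the argument is essentially formal. A secondary point requiring care is confirming that the subgroup \eqref{def:SG} really is infinite cyclic (rank one, not rank zero or two) under the hypothesis $c_2(M)\neq 0$ in $H^4(M,\Z)_f$ with $h^2(M)=2$, which is what makes $m$ and hence $\lambda(M)$ defined in the first place; this is exactly the content cited from \cite{Lee20}, Section~8.
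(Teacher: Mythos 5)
The paper gives no proof of this proposition --- it is imported verbatim from Lee's work (Section~8 of \cite{Lee20}) --- and your argument is the correct, standard one: $\lambda(M)=\norm{m^3}$ depends only on the isomorphism class of the pair $(\mu, c_2(M))$ because the rank-one subgroup $\eqref{def:SG}$ and its generator $m$ (up to sign) are defined intrinsically from that pair, so unequal $\lambda$-values obstruct any $c_2$-compatible isomorphism of cubic forms, and Jupp's classification then forbids a diffeomorphism. Your attention to the two sign ambiguities (the choice of $m$ up to $\pm 1$ and possible orientation reversal) and to the rank-one claim for the subgroup is exactly the right bookkeeping, so the proposal is complete as written.
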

We will give explicit computations on these invariants in the following section. 
In order to describe two-dimensional cohomology, we apply Theorem $\ref{thm:NamHoon}$. 

\section{Proof of Theorem $\ref{thm:main}$}\label{sec:CubicForm}
As listed in Table $\ref{table:CY3Pic2}$, there are $8$ doubling Calabi-Yau $3$-folds $M$ with Picard number two whose Hodge numbers $(h^{1,1}(M), h^{2,1}(M))$ are overlapped with 
another one. Specifically, these $8$ overlapping Hodge numbers are divided into three cases; $(h^{1,1}(M), h^{2,1}(M))=(2,86)$, $(2,58)$ and $(2,44)$.
Hence our proof is also divided into three subsections from Section $\ref{sec:(2,86)}$ to Section $\ref{sec:(2,44)}$, and we shall find the values of cubic forms and the $\lambda$-invariants of each doubling Calabi-Yau $3$-folds
in order to prove Theorem $\ref{thm:main}$. We refer the reader \cite{Y22} for the detailed calculations of the cubic forms and the $\lambda$-invariants for the abridged doubling Calabi-Yau $3$-folds.

\subsection{$(h^{1,1}(M), h^{2,1}(M))=(2,86)$ case}\label{sec:(2,86)}
To begin with, we consider the following Fano $3$-folds with Picard number one:
\begin{enumerate}
\item[(a)] a quartic hypersurface in $\C P^4$ (ID $1$-$2$), and
\item[(b)] the projective space $\C P^3$ (ID $1$-$17$).
\end{enumerate}
Then the corresponding doubling Calabi-Yau $3$-folds are obtained as follows.

\vskip 4pt

\noindent (a). Let $V:=V(4)\subset \C P^4$ be a smooth quartic hypersurface, $D\in |-K_V| $ a smooth
anicanonical $K3$ divisor. We take a smooth curve $C$ in $D$ representing the self-intersection class of $D\cdot D$.
Let $\varphi: Y\dasharrow V$ be the blow-up of $V$ along the curve $C$.
Taking the proper transform of $D$ under the blow-up $\varphi$, we still denote it by $D$.
By applying Theorem $\ref{thm:DY}$, we obtain the doubling Calabi-Yau $3$-fold $M$ with $(h^{1,1},h^{2,1})=(2,86)$.

\vskip 4pt

\noindent (b). Starting from $\C P^3$ with a smooth anicanonical $K3$ divisor $D\in |\mathcal O_{\C P^3}(4)|$,
we consider a smooth curve $C$ in $D$ as above.
By repeating the same procedure in (a), we obtain the doubling Calabi-Yau $3$-fold $M'$ having the same Hodge numbers $(h^{1,1},h^{2,1})=(2,86)$.

\vskip 4pt

The main part of this section is to show the following statement (see Table $\ref{table:invariants}$).
\begin{proposition}
Let $M$ and $M'$ be as above. Let $f_i$ and $e_i$ $(i=1,2)$ be the generators of $H^2(M,\Z)$ and $H^2(M', \Z)$ respectively. 
Then the cubic products of each doubling Calabi-Yau $3$-fold are:
\begin{align}
\begin{split}\label{eq:CubicForms}
f_1^3&=8, \qquad f_1^2f_2=4, \qquad f_1f_2^2=0, \qquad f_2^3=0,  \qquad\,\,\, \lambda(M)=540  \qquad \text{and} \\
e_1^3&=2, \qquad e_1^2e_2=4, \qquad e_1e_2^2=0, \qquad e_2^3=0,   \qquad \lambda(M')=4320. 
\end{split}
\end{align}
In particular, $M$ and $M'$ are not diffeomorphic.
\end{proposition}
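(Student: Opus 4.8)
The plan is to compute, for each of the two doubling Calabi-Yau $3$-folds $M$ and $M'$, an explicit basis of $H^2(-,\Z)$ together with the cubic intersection form and the second Chern class, and then to extract the $\lambda$-invariant; once the two $\lambda$-invariants are seen to differ, Proposition $\ref{prop:lambda_inv}$ immediately gives the non-diffeomorphism. So the substance of the argument is entirely the computation of the invariants recorded in the displayed equations $\eqref{eq:CubicForms}$.

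First I would set up the cohomology. By Theorem $\ref{thm:NamHoon}$, $H^2(M,\Z)$ (up to torsion, with cup products preserved) is $G/\braket{D,-D}$, where $G$ is the fibre product of $H^2(Y_1,\Z)$ and $H^2(Y_2,\Z)$ over $H^2(D,\Z)$; in the doubling construction $Y_1\cong Y_2\cong Y=\mathrm{Bl}_C(V)$ and the two restriction maps to the $K3$ surface $D$ agree. Since $\rho(V)=1$, the group $H^2(Y,\Z)$ is generated by the pullback $h$ of the hyperplane class (equivalently $-K_V$ suitably normalized) and the exceptional class $\e$ of the blow-up along $C$, so I would write a basis $f_1,f_2$ of $H^2(M,\Z)$ as the images of explicit pairs $(\alpha,\alpha')\in G$ built from $h$ and $\e$, checking the unimodularity hypothesis of Theorem $\ref{thm:NamHoon}$ using a dual pair $\beta_1,\beta_2$ in $H^4$. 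The cubic form $f_1^3, f_1^2 f_2, f_1 f_2^2, f_2^3$ is then computed from the intersection theory on $Y$ (triple products of $h$ and $\e$), which in turn reduces to $(-K_V)^3$ and the genus/degree of the curve $C$ representing $D\cdot D$ on $V$; the relation $C \sim (-K_V)|_D^{\,2}$ on the $K3$ surface $D$ and standard blow-up formulas for $\e^2$ and $\e^3$ do the rest. For $V(4)\subset\C P^4$ one has $(-K_V)^3=4$, for $\C P^3$ one has $(-K_V)^3=64$; feeding these through the formulas should reproduce $(f_1^3,f_1^2f_2,f_1f_2^2,f_2^3)=(8,4,0,0)$ and $(e_1^3,e_1^2e_2,e_1e_2^2,e_2^3)=(2,4,0,0)$.

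Next I would compute $c_2(M)$. On $Y=\mathrm{Bl}_C(V)$ the total Chern class is governed by the blow-up formula, and $c_2(Y)$ is expressed in terms of $c_2(V)$, $c_1(V)$, $[C]$ and $\e$; restricting/gluing along $D$ and using $H^4(M,\Z)_f\cong \Z^2$ (dual to $H^2$) lets me write $c_2(M)$ as an explicit integral combination of the duals of $f_1,f_2$. Then I would identify the subgroup $\set{\ell\in H^2(M,\Z)_f \mid c_2(M)\cdot\ell=0}$: it is the kernel of the linear functional $\ell\mapsto c_2(M)\cdot\ell$ on $\Z^2$, hence infinite cyclic, and I would take $m$ to be a generator, i.e. the primitive vector annihilating $c_2(M)$. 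Finally $\lambda(M)=|m^3|$ is read off by plugging $m=a f_1+b f_2$ into the already-computed cubic form. Running this for both threefolds should yield $\lambda(M)=540$ and $\lambda(M')=4320$. Since $540\neq 4320$, Proposition $\ref{prop:lambda_inv}$ shows the cubic forms of $M$ and $M'$ are not isomorphic, hence $M$ and $M'$ are not diffeomorphic.

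The main obstacle is bookkeeping rather than conceptual: correctly pinning down the generators $f_i$ of $H^2(M,\Z)$ via the fibre-product description of Theorem $\ref{thm:NamHoon}$ (including verifying the unimodularity condition and handling the quotient by $\braket{D,-D}$), and then tracking the second Chern class through the blow-up and the gluing so that $c_2(M)$ is expressed in the \emph{same} basis as the cubic form — an off-by-a-factor there changes $m$ and hence $\lambda$. The intersection-number inputs $(-K_V)^3$ and the numerical data of $C\subset D$ are classical (Iskovskikh--Mori--Mukai), so once the basis is fixed the remaining work is the routine polynomial algebra summarized in Table $\ref{table:invariants}$, with the detailed computations deferred to \cite{Y22}.
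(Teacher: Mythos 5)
Your proposal follows essentially the same route as the paper: identify the generators of $H^2(M,\Z)$ via Theorem $\ref{thm:NamHoon}$ applied to the blow-ups $Y_i=\mathrm{Bl}_C(V)$, compute the cubic products from the standard blow-up intersection formulas (using $(-K_V)^3$ and the degree $2g-2$ of $C$), obtain $c_2$ from the blow-up formula for Chern classes, and then read off $\lambda$ from the primitive annihilator of $c_2$, concluding by Proposition $\ref{prop:lambda_inv}$. The plan is correct and matches the paper's computation in every essential step.
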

\begin{proof} 
According to Proposition $\ref{prop:lambda_inv}$,
we deduce the second statement from $\eqref{eq:CubicForms}$.
In the following subsections, we shall find the values of invariants in $\eqref{eq:CubicForms}$ by explicit computations.
\end{proof}

\subsubsection{ID $1$-$17$: $\C P^3$ case}\label{subsec:No.17}
 Let $V$ be the $3$-dimensional projective space. Let $D\in |\mathcal O_V(4)|$ be a smooth quartic $K3$ divisor and 
$C$ a smooth curve in $D$ representing $D\cdot D$. Taking $Y_i$ to be the blow-ups $\mathrm{Bl}_C(V)$ of $V$ along $C$ for $i=1,2$, we denote the exceptional divisors $E_i:=\pi_i^{-1}(C)$ 
for $\pi_i: Y_i\dasharrow V$. Then the cohomology ring $H^*(Y_i)=H^*(Y_i,\C)$ can be computed as
\[
H^2(Y_i)=\pi_i^*H^2(V)\oplus \C\bracket{E_i}=\C\bracket{H_i, E_i}
\]
with $H_i=\pi_i^*(H)\subset Y_i$ for the ample generator $H\in H^2(V,\mathbb Z)$ (see \cite[p.$621$]{GH94}).
A straightforward computation shows that the proper transform $\widetilde D_i$ of $D$ in $Y_i$ is $4H_i-E_i$ for each $i$. 
Let $\delta=\bracket{-\widetilde D_1, \widetilde D_2}=\bracket{E_1-4H_1, 4H_2-E_2}$. Here and hereafter we denote the proper transforms $\widetilde D_i$ by $D$ which can be regarded as a divisor in $Y=Y_1\cup Y_2$. 
Since $H^2(Y_i,\Z)=\bracket{H_i, E_i}$ for $i=1,2$, 
any element $\ell_i \in H^2(Y_i,\Z)$ is written as
\[
\ell_1=aH_1+bE_1, \qquad \text{and} \qquad   \ell_2=cE_2+dH_2 \qquad (a,b,c,d \in \Z).
\]
Thus the condition 
$i_1^*(\ell_1)=i_2^*(\ell_2)$ for the inclusion map $i_\alpha: D\to Y_\alpha$ $(\alpha=1,2)$
implies that $a+4b=4c+d$ which means $d=a+4b-4c$.
Then we see that any element in 
$H^2(Y_1, \Z)\times H^2(Y_2, \Z)$ can be expressed as
\[
(aH_1+bE_1, cE_2+(a+4b-4c)H_2 )=(a+4b)(H_1, H_2)-(b+c)(4H_1-E_1, 0)-c\delta.
\]
We remark that the condition in Theorem $\ref{thm:NamHoon}$ has been checked in \cite[p.$215$]{Lee20}. Hence $\eqref{eq:2ndCohomology}$ yields that
\[
H^2(M',\Z)\cong \bracket{(H_1, H_2), (4H_1-E_1, 0)}
\]
up to torsion, where $M'$ is the doubling Calabi-Yau $3$-fold. 
Thus we can take $e_1=(H_1, H_2)$ and $e_2=(4H_1-E_1, 0)$ as generators of $H^2(M',\Z)$.

Next we shall calculate the cubic products in $H^{2*}(M',\Z)$.
Let $L$ be a fiber over a point on $C$ under the blow-up $\pi_1$. Then
\begin{equation}\label{eq:Intersection1}
H_1^3=1, \quad H_1L=0, \quad E_1L=-1 \quad \text{and} \quad H_1^2E_1=0
\end{equation}
by \cite[p.$620$]{GH94}. Since a hyperplane in $\C P^3$ meets $C$ in $16$ points, we find $H_1E_1=16L$. Moreover
\begin{align}\label{eq:Intersection2}
 E_1^2&=-16H_1^2+(4\cdot 16+(-K_{\C P^3}^3))L=-16H_1^2+128L
\end{align}
by the table in \cite[p.$623$]{GH94}. Then $\eqref{eq:Intersection1}$ and $\eqref{eq:Intersection2}$ imply that
\begin{align*}
H_1E_1^2&=-16H_1^3+128H_1L=-16 \qquad \quad \text{and} \\
E_1^3&=-16H_1^2E_1+128E_1L=-16\cdot 0+128\cdot (-1)=-128.
\end{align*}
As one can see in \cite[Chapter IV]{LeeThesis}, the cup product is given by the rule
\begin{align*}
H^2(M',\Z)\times H^2(M',\Z)\times H^2(M',\Z)& \longrightarrow \Z \\
((l_1, l_2), (m_1, m_2), (n_1, n_2))& \longmapsto l_1m_1n_1+l_2m_2n_2.
\end{align*}
Consequently we see that
\begin{align}
\begin{split}\label{eq:cubic1}
 e_1^3&=(H_1, H_2)^3=H_1^3+H_2^3=1+1=2, \\ 
e_1^2e_2&=(H_1, H_2)^2(4H_1-E_1,0)=H_1^2(4H_1-E_1)=4H_1^3-H_1^2 E_1=4, \\
e_1e_2^2&=(H_1, H_2)(4H_1-E_1,0)^2=H_1(4H_1-E_1)^2=H_1(16H_1^2-8H_1E_1+E_1^2) \\
&=16H_1^3+H_1E_1^2=16-16=0, \\
e_2^3&=(4H_1-E_1)^3=64H_1^3-48H_1^2E_1+12H_1E_1^2-E_1^3  \\
&=64-0+12\cdot(-16)+128=0.
\end{split}
\end{align}

Now we compute the $\lambda$-invariant. For this we first need to find the Chern classes as follows.
According to Section $7$ in \cite{Lee10}, let us denote $c_2(M')=(c_2(Y_1), c_2(Y_2))$. By the blow-up formula of Chern classes \cite[p.$610$]{GH94}, we see that
\begin{align*}
c_2(Y_i)&=\pi_i^*(c_2(\C P^3)+\eta_C)-\pi_i^*c_1(\C P^3)\cdot E_i\\
&=(6H_i^2+16H_i^2)-4H_iE_i=22H_i^2-4H_iE_i
\end{align*}
for $i=1,2$, where $\eta_C\in H^4(Y_i,\Z)$ is the class of the blow-up center $C$.
Then the products of $c_2(M')$ and $e_i$ $(i=1,2)$ are
\begin{align*}
e_1\cdot c_2(M')&=H_1\cdot c_2(Y_1)+H_2\cdot c_2(Y_2) \\
&=22H_1^3-4H_1^2E_1+22H_2^3-4H_2^2E_2=44=4\cdot 11 & \text{and}\\
e_2\cdot c_2(M')&=(4H_1-E_1)\cdot c_2(Y_1)+0=88H_1^3+4H_1E_1^2=24=4\cdot6.
\end{align*}
Hence the subgroup 
\[
\Set{e\in H^2(M',\Z)_f | e\cdot c_2(M')=0}
\]
defined in $\eqref{def:SG}$ is generated by a unique element:
\[
\Set{e\in \bracket{e_1,e_2}| e\cdot c_2(M')=0}=\bracket{6e_1-11e_2}.
\]
Consequently we find that
\[
\lambda(M')=\norm{(6e_1-11e_2)^3}=\norm{216e_1^3-1188e_1^2e_2}=4320
\]
by the results in $\eqref{eq:cubic1}$.

\subsubsection{ID $1$-$2$: $V(4)\subset \C P^4$ case}\label{subsec:No.2}
 Let $V$ be a quartic hypersurface in $\C P^4$. Note that $V$ is the Fano $3$-fold with 
 $-K_V^3=4$ (see \cite[p.$215$]{IsPr}). By Lefschetz Hyperplane Theorem, we have more specific description of $V$
such as
\[
h^{p,q}(V)=\begin{array}{ccccccc}
&&& 1 &&& \\
&& 0 && 0 && \\
&0 && 1 && 0& \\
0 && 30 && 30 && 0 \\
&0 && 1 && 0& \\
&& 0 && 0 && \\
&&& 1 &&& \\
\end{array},
\qquad\quad g=g(V)=\frac{H^3}{2}+1=\frac{-K_V^3}{2}+1=3
\]
where $g$ denotes the genus of Fano variety. In particular, $H^3=4$ for the ample generator $H\in H^2(V,\Z)$.
Let $D\in |-K_V|$ be a smooth anticanonical divisor and let $C\in |\mathcal O_D(1)|$ be a smooth curve in $D$ which represents the intersection class of $D\cdot D$.
Then the degree of $C$ is $2g-2$ and this is the reason why $g=\frac{-K_V^3}{2}+1$ is called the {\emph{genus}} of a Fano $3$-fold as in \cite[p.$32$]{IsPr}.
Taking $Y_i$ to be the blow-ups $\mathrm{Bl}_C(V)$ of $V$ along $C$, we again denote the exceptional divisors by $E_i$ for $i=1,2$.
Then the cohomology rings of $Y_i$ are
\[
H^2(Y_i)=\C\bracket{\pi_i^*(H), E_i}=\C\bracket{H_i, E_i}
\]
and the proper transforms $D_i$ of $D$ in $Y_i$ are $H_i-E_i$. Let $\delta=\bracket{-D_1, D_2}=\bracket{E_1-H_1, H_2-E_2}$.
Then we see that any element in $H^2(Y_1,\Z)\times H^2(Y_2,\Z)$ is written as
\[
(aH_1+bE_1, cE_2+(a+b-c)H_2 )=(a+b)(H_1, H_2)-(b+c)(H_1-E_1, 0)-c\delta.
\]
Thus we conclude that
\[
H^2(M,\Z)\cong \bracket{(H_1, H_2), (H_1-E_1, 0)}
\]
up to torsion. Hence in this case, we take $f_1=(H_1, H_2)$ and $f_2=(H_1-E_1, 0)$ as generators of $H^2(M,\Z)$. 

Now we compute the cubic products of $f_i$ in $H^{6}(M,\Z)$. Let us denote by $\pi_i:Y_i=\mathrm{Bl}_C(V)\dasharrow V$ two copies of the blow-ups of $V$ along $C$ for $i=1,2$.
Let $L$ be a fiber over a point on $C$ under the blow-up $\pi_i$. 
Since the intersection number is preserved by the total transform,
we see that $H_i^3=(\pi_i^*H)^3=H^3=4$. 
Moreover, $H_iL=0$ and $E_iL=-1$. 
Let $d$ be the degree of $C$. Since a hyperplane in $V$ will intersect $C$ in $d$ points, its inverse image $H_i$ in $Y_i$ will meet the exceptional divisor $E_i$ in $d$ fibers.
Thus
\[
H_iE_i=dL=(2g-2)L=4L \qquad \text{and} \qquad E_i^2=-4H_i^2+8L.
\]
Then we see that
\begin{align*}
H_i^2E_i&=4H_iL=0, \qquad H_iE_i^2=4E_iL=-4 & \text{and} \qquad \\
E_i^3&=-4H_i^2E_i+8LE_i=-8.
\end{align*}
In sum, we find the following table of the multiplication of the intersection forms on $H^{2*}(Y_i,\Z)$:
\begin{center}
\begin{tabular}{c|ccc}
& $H_i^2$ & $L$  & \\  \hline  \vspace{-0.4cm}\\
$H_i$ & $4$ & $0$ \\
$E_i$ & $0$ & $-1$ \\
\end{tabular}\hfil \quad
\begin{tabular}{c|ccc}
& $H_i$ & $E_i$  & \\ \hline  \vspace{-0.35cm}\\
$H_i$ & $H_i^2$ & $4L$ \\ 
$E_i$ & $4L$ & $-4H_i+8L$ \\
\end{tabular}
\end{center}
Plugging these values into the products, we find that
\begin{align*}
 f_1^3&=(H_1, H_2)^3=H_1^3+H_2^3=8, \\ 
f_1^2f_2&=(H_1, H_2)^2(H_1-E_1,0)=H_1^3-H_1^2 E_1=4, \\
f_1f_2^2&=(H_1, H_2)(H_1-E_1,0)^2=H_1^3-2H_1^2E_1+H_1E_1^2=4-4=0,\\
f_2^3&=(H_1-E_1, 0)^3=H_1^3-3H_1^2E_1+3 H_1E_1^2-E_1^3 =4-0+3\cdot(-4)-(-8)=0.
\end{align*}

Next we calculate the $\lambda$-invariant of the resulting doubling Calabi-Yau $3$-fold $M$. Since $V$ is a degree $4$ smooth hypersurface in $\C P^4$, the total Chern classes of $V$ are given by the formula
\[
\frac{(1+H)^5}{(1+4H)}=(1+5H+10H^2)(1-4H+16H^2)+O(H^3)=1+H+6H^2+O(H^3).
\]
Hence we find that the second Chern classes of $Y_i$ are given by
\begin{equation}\label{eq:2ndChern}
c_2(Y_i)=\pi_i^*(c_2(V)+\eta_C)-\pi_i^*(c_1(V))\cdot E_i=7H_i^2-H_iE_i
\end{equation}
by \cite[p.$610$]{GH94}, where $\eta_C$ denotes the class of the blow-up center $C\in |\mathcal O_D(1)|$.
Then the products of $c_2(M)$ and $f_i$ $(i=1,2)$ are
\begin{align*}
f_1\cdot c_2(M)&=7H_1^3-H_1^2E_1+7H_2^3-H_2^2E_2=56=8\cdot 7,\\
f_2\cdot c_2(M)&=(7H_1^2-H_1E_1)(H_1-E_1)\\
&=7H_1^3-H_1^2E_1-7H_1^2E_1+H_1E_1^2\\
&=7\cdot4-4=24=8\cdot 3.
\end{align*}
Since the subgroup $\Set{f\in \bracket{f_1,f_2} | f\cdot c_2(M)=0}$ of $H^2(M,\Z)$ is generated by a single element
$3f_1-7f_2$, the $\lambda$-invariant of $M$ is 
\begin{align*}
\lambda(M)&=|(3f_1-7f_2)^3|=|27f_1^3-189f_1^2f_2+441f_1f_2^2-343f_2^3|\\
&=|27\cdot 8-189\cdot 4|=540.
\end{align*}

\subsection{$(h^{1,1}(M), h^{2,1}(M))=(2,58)$ case}\label{sec:(2,58)}
In this case, the corresponding doubling Calabi-Yau $3$-folds are listed in Table $\ref{table:CY3Pic2}$ with the underlying Fano $3$-folds, (a) ID $1$-$4$, 
(b) ID $1$-$12$ and (c) ID $1$-$14$. These Fano $3$-folds are described as follows:
\begin{enumerate} 
\item[(a)] a complete intersection of three quadrics in $\C P^6$; $V(2,2,2)\subset \C P^6$,
\item[(b)] a hypersurface of degree $4$ in the weighted projective space $\C P(1,1,1,1,2)$;\\ $V(4)\subset \C P^4(1^4,2)$, and
\item[(c)] a complete intersection of two quadrics in $\C P^5$; $V(2,2)\subset \C P^5$.
\end{enumerate}
In this paper, we only see the detailed computation on the doubling Calabi-Yau $3$-fold with the underlying Fano $3$-fold (a) ID $1$-$4$. We refer the reader to \cite{Y22} for more details of the remaining doubling Calabi-Yau $3$-folds.

\subsubsection{ID $1$-$4$: $V(2,2,2) \subset \C P^{6}$ case}\label{sec:ID1-4}
Let $V$ be a smooth complete intersection of $3$ quadrics in $\C P^6$, which is the Fano $3$-fold with $-K_V^3=8$ and
\[
h^{p,q}(V)=\begin{array}{ccccccc}
&&& 1 &&& \\
&& 0 && 0 && \\
&0 && 1 && 0& \\
0 && 14 && 14 && 0 \\
&0 && 1 && 0& \\
&& 0 && 0 && \\
&&& 1 &&& \\
\end{array}.
\]
By the adjunction formula, we see that
\begin{align*}
K_{V\!(2)}&\cong \left( K_{\C P^6}+[V(2)]\right)\!\big|_{V\!(2)}=-6H, \quad K_{V\!(2,2)}\cong \left( K_{V(2)}+[V(2,2)]\right)\!\big|_{V\!(2,2)}=-3H, \\
  \text{and} \qquad K_{V}&\cong \left( K_{V\!(2,2)}+[V]\right)\!\big|_{V}=(-3+2)H=-H
\end{align*}
where $H\in H(V,\Z)$ is the ample generator and $V(2)\subset \C P^6$ is a smooth quadric hypersurface in $\C P^6$.
Let $D=H\in |-K_V|$ be an anticanonical divisor and $C\in |\mathcal O_D(1)|$ a smooth curve in $D$ representing the intersection class of $D\cdot D$.
For $i=1,2$, we take the blow-ups $Y_i=\mathrm{Bl}_C(V)$ which have the cohomology rings $H^2(Y_i)=\C\bracket{H_i, E_i}$.
Then the proper transforms $D_i$ of $D$ in $Y_i$ are $H_i-E_i$. Thus we set $\delta$ by $\bracket{-D_1, D_2}=\bracket{E_1-H_1, H_2-E_2}$.
We observe that any element in $H^2(Y_1,\Z)\times H^2(Y_2,\Z)$ is written as
\[
(aH_1+bE_1, cE_2+(a+b-c)H_2 )=(a+b)(H_1, H_2)-(b+c)(H_1-E_1, 0)-c\delta.
\]
Consequently, we find that
\[
H^2(M,\Z)\cong \bracket{(H_1, H_2), (H_1-E_1, 0)}
\]
up to torsion. This implies that two generators of $H^2(M,\Z)$ can be taken as $e_1=(H_1, H_2)$ and $e_2=(H_1-E_1, 0)$.

In order to compute the cubic forms in $H^{6}(M,\Z)$, we first see that the Fano genus $g$ of $V$ is
\[
g=\frac{-K_V^3}{2}+1=\frac{8}{2}+1=5.
\]
Then the straightforward computation shows that $H_i^3=8$, $H_iL=0$ and $E_iL=-1$ where $L$ is a fiber over a point on $C$ under the blow-up.
Furthermore, for $d=\deg C$, we have 
\begin{align*}
H_iE_i&=dL=(2g-2)L=8L \qquad \text{and}\\
H_i^2E_i&=H_i(H_iE_i)=8H_iL=0.
\end{align*}
Let $\tau=2g$ be the number of branches of the double curve $Y_i \supset \widetilde C \stackrel{2:1}{\longrightarrow} C\subset V$. By the list in \cite[p.$623$]{GH94}, we see that
\begin{align*}
E_i^2&=-dH_i^2+(4d+2g-2-2\tau)L=-8H_i^2+(32+10-2-20)L=-8H_i^2+20L, \\
H_iE_i^2&=H_i(-8H_i^2+20L)=-8H_i^3+20H_iL=-8\cdot 8=-64, \qquad \text{and}  \\
E_i^3&=E_i(-8H_i^2+20L)=-8E_iH_i^2+20E_iL=-20.
\end{align*}
In the following table, we summarize the values of the multiplication of the intersection forms on $H^{2*}(Y_i,\Z)$:
\begin{center}
\begin{tabular}{c|ccc}
& $H_i^2$ & $L$  &  \\ \hline  \vspace{-0.4cm}\\
$H_i$ & $8$ & $0$ \\
$E_i$ & $0$ & $-1$ \\
\end{tabular}\hfil \quad
\begin{tabular}{c|ccc}
& $H_i$ & $E_i$  &  \\ \hline  \vspace{-0.35cm}\\
$H_i$ & $H_i^2$ & $8L$ \\ 
$E_i$ & $8L$ & $-8H_i^2+20L$ \\
\end{tabular}
\end{center}
Substituting these values into the cubic forms, we find that
\begin{align*}
 e_1^3&=(H_1, H_2)^3=H_1^3+H_2^3=16, \\ 
e_1^2e_2&=(H_1, H_2)^2(H_1-E_1,0)=H_1^3-H_1^2 E_1=8, \\
e_1e_2^2&=(H_1, H_2)(H_1-E_1,0)^2=H_1^3-2H_1^2E_1+H_1E_1^2=8-64=-56,\\
e_2^3&=(H_1-E_1,0)^3=H_1^3-3H_1^2E_1+3 H_1E_1^2-E_1^3 =8+3\cdot(-64)-(-20)=-164.
\end{align*}

Next we compute the $\lambda$-invariant. Since $V$ is a complete intersection of two quadrics in $\C P^5$, the total Chern classes of $V$ are given by the formula
\begin{align*}
\frac{(1+H)^7}{(1+2H)^3}&=\bigl(1+7H+\begin{pmatrix} 7 \\2 \end{pmatrix}H^2 \bigr)\bigl(1+2H \bigr)^{-3}+O(H^3) \\
&=(1+7H+21H^2)(1-6H+24H^2)+O(H^3)=1+H+3H^2+O(H^3).
\end{align*}
Hence the second Chern classes of $Y_i$ are computed as
\begin{align*}
c_2(Y_i)&=\pi_i^*(c_2(V)+\eta_C)-\pi_i^*(c_1(V))\cdot E_i\\
&=\pi_i^*(3H^2+H^2)-H_iE_i=4H_i^2-H_iE_i.
\end{align*}
Then the products of $c_2(M)$ and $e_i$ are given by
\begin{align*}
e_1\cdot c_2(M)&=4H_1^3-H_1^2E_1+4H_2^3-H_2^2E_2=64=32\cdot 2,\\
e_2\cdot c_2(M)&=(H_1-E_1)(4H_1^2-H_1E_1)\\
&=4H_1^3-H_1^2E_1-4H_1^2E_1+H_1E_1^2\\
&=4\cdot 8-64=-32=32\cdot (-1).
\end{align*}
Since the subgroup $\Set{e\in \bracket{e_1,e_2} | e\cdot c_2(M)=0}$ of $H^2(M,\Z)$ is generated by a single element
$e_1+2e_2$, the $\lambda$-invariant of $M$ is 
\begin{align*}
\lambda(M)&=|(e_1+2e_2)^3|=|e_1^3+6e_1^2e_2+12 e_1e_2^2+8e_2^3|\\
&=|16+48-672-1312|=1920.
\end{align*}

\subsubsection{ID $1$-$12$: $V(4) \subset \C P^{4}(1^4,2)$ case}\label{sec:ID1-12}
This and the following subsection collects the minimum amount of calculation necessary to see the values of the cubic forms and the $\lambda$-invariants.

Let $V=V(4)\subset \C P^4(1^4,2)$ be a smooth hypersurface of degree $4$ in the weighted projective space. As usual,
we set $D\in |\mathcal O_V(2)|$, $C\in |\mathcal O_D(2)|$ and $\pi_i:Y_i=\mathrm{Bl}_C(V)\dasharrow V$ for $i=1,2$.
Then we see that the proper transform $D_i$ of $D$ in $Y_i$ is $2H_i-E_i$ and $H^2(Y_i)=\C\braket{H_i, E_i}$ for each $i$.
Thus any element in $H^2(Y_1,\Z)\times H^2(Y_2,\Z)$ can be written as
\[
(a+2b)(H_1, H_2)-(b+c)(2H_1-E_1, 0)-c\delta, \qquad \delta=\braket{E_1-2H_1, 2H_2-E_2}.
\]
This implies that
\[
H^2(M,\Z)\cong \braket{(H_1,H_2),(2H_1-E_1,0)}
\]
up to torsion. Setting $e_1=(H_1,H_2)$ and $e_2=(2H_1-E_1,0)$ as generators of $H^i(M,\Z)$, we find that
\begin{align*}
 e_1^3&=(H_1, H_2)^3=H_1^3+H_2^3=32, \\ 
e_1^2e_2&=(H_1, H_2)^2(2H_1-E_1,0)=2H_1^3-H_1^2 E_1=32, \\
e_1e_2^2&=(H_1, H_2)(2H_1-E_1,0)^2=4H_1^3-4H_1^2E_1+H_1E_1^2=-192,\\
e_2^3&=(2H_1-E_1,0)^3=8H_1^3-12H_1^2E_1+6 H_1E_1^2-E_1^3 =-1364.
\end{align*}
In the same manner as the previous calculation in Section $\ref{sec:ID1-4}$, the second Chern class of $Y_i$ is $c_2(Y_i)=10H_i^2-2H_iE_i$ for each $i$.
Consequently, the subgroup $\set{e\in\braket{e_1,e_2}| e\cdot c_2(M)=0}$ of $H^2(M,\Z)$ is generated by $3e_1+5e_2$. Hence we conclude that the $\lambda$-invariant is $\lambda(M)=|(3e_1+5e_2)^3|=208516$.

\subsubsection{ID $1$-$14$: $V(2,2) \subset \C P^{5}$ case}\label{sec:ID1-14}
Let $V=V(2,2)\subset \C P^5$ be a smooth complete intersection of two quadrics in $\C P^5$. 
We set $D\in |\mathcal O_V(2)|$, $C\in |\mathcal O_D(2)|$ and $\pi_i:Y_i=\mathrm{Bl}_C(V)\dasharrow V$ for $i=1,2$.
Then we see that the proper transform $D_i$ of $D$ in $Y_i$ is $2H_i-E_i$ and $H^2(Y_i)=\C\braket{H_i, E_i}$ for each $i$.
Thus any element in $H^2(Y_1,\Z)\times H^2(Y_2,\Z)$ can be written as
\[
(a+2b)(H_1, H_2)-(b+c)(2H_1-E_1, 0)-c\delta, \qquad \delta=\braket{E_1-2H_1, 2H_2-E_2}.
\]
This implies that
\[
H^2(M,\Z)\cong \braket{(H_1,H_2),(2H_1-E_1,0)}
\]
up to torsion. Setting $e_1=(H_1,H_2)$ and $e_2=(2H_1-E_1,0)$ as generators of $H^i(M,\Z)$, we find that
\begin{align*}
 e_1^3&=(H_1, H_2)^3=H_1^3+H_2^3=64, \\ 
e_1^2e_2&=(H_1, H_2)^2(2H_1-E_1,0)=2H_1^3-H_1^2 E_1=64, \\
e_1e_2^2&=(H_1, H_2)(2H_1-E_1,0)^2=4H_1^3-4H_1^2E_1+H_1E_1^2=-896,\\
e_2^3&=(2H_1-E_1,0)^3=8H_1^3-12H_1^2E_1+6 H_1E_1^2-E_1^3 =-5796.
\end{align*}
Then the second Chern class of $Y_i$ is $c_2(Y_i)=7H_i^2-2H_iE_i$ for each $i$.
As a consequence, the subgroup $\set{e\in\braket{e_1,e_2}| e\cdot c_2(M)=0}$ of $H^2(M,\Z)$ is generated by $25e_1+7e_2$. Hence we conclude that the $\lambda$-invariant is $\lambda(M)=|(25e_1+7e_2)^3|=3440828$.

\subsection{$(h^{1,1}(M), h^{2,1}(M))=(2,44)$ case}\label{sec:(2,44)}
Now we consider the case where the doubling Calabi-Yau $3$-folds have the same Hodge numbers $(h^{1,1}(M), h^{2,1}(M))=(2,44)$, that is, the underlying Fano $3$-folds, (a) ID $1$-$8$, 
(b) ID $1$-$9$ and (c) ID $1$-$10$. 
We remark that these Fano $3$-folds have the following geometric description:
\begin{enumerate}
\item[(a)] a section of Pl\"ucker embedding of $\mathrm{SGr}(3,6)$ by codimension $3$ subspace, where $\mathrm{SGr}(3,6)$ is the Lagrangian Grassmannian; 
$V(1,1,1)\hookrightarrow \mathrm{SGr}(3,6)$,

\item[(b)] a section of $\mathrm{G}_2\mathrm{Gr}(2,7)$ by codimension $2$ subspace; $V(1,1)\hookrightarrow \mathrm{G}_2\mathrm{Gr}(2,7)$,  and

\item[(c)] the zero locus of $\left( \bigwedge^{\!2}\mathcal V^{\vee} \right)^{\!\oplus 3}$ on $\mathrm{Gr}(3,7)$ where $\mathcal V\to \mathrm{Gr}(3,7)$ is the tautological rank $3$ vector bundle over the Grassmannian $\mathrm{Gr}(3,7)$.
\end{enumerate}
In the above description (b), $\mathrm{G}_2\mathrm{Gr}(2,7)$ denotes the adjoint $\mathrm G_2$-Grassmannian which is the zero locus of the section $s\in \bigwedge^{\!3} \C^7$ corresponding to the $\mathrm G_2$-invariant $3$-form. 
See \cite{Fano}, \cite[Chapter $4$]{IsPr}, \cite[Section 5]{D08} for more details. Systematically, all of these Fano $3$-folds are expressed as anticanonically embedded Fano $3$-folds $V=V_{2g-2}\subset \C P^{g+1}$
with Picard number $1$ and genus $g$.
Moreover, we may assume that $\mathrm{Pic}(V)=H\cdot \Z$ where $H$ is the unique generator of 
$H^2(V,\Z)$ and $H=-K_V$ for each case (a) $g=9: V_{16}\subset \C P^{10}$, (b) $g=10: V_{18}\subset \C P^{11}$
and (c) $g=12: V_{22}\subset \C P^{13}$, respectively.

\subsubsection{ID $1$-$8$: $V_{16}\subset \C P^{10}$ case}\label{sec:No.8}
Firstly, we consider case (a). Let $V=V_{16}\subset \C P^{10}$ be an anticanonically embedded Fano $3$-fold with genus $g=9$, $\mathrm{Pic}(V)=\Z\cdot H$ and $-K_V=H$.
According to \cite{Fano}, we have $-K_V^3=16$ and 
\begin{equation}\label{eq:No.9Hodge}
h^{p,q}(V)=\begin{array}{ccccccc}
&&& 1 &&& \\
&& 0 && 0 && \\
&0 && 1 && 0& \\
0 && 3 && 3 && 0 \\
&0 && 1 && 0& \\
&& 0 && 0 && \\
&&& 1 &&& \\
\end{array}.
\end{equation}
Let $D\in |\mathcal O_V(1)|$ be an anticanonical divisor and $C\in |\mathcal O_D(1)|$ a smooth curve in $D$. 
Setting $Y_i$ to be two copies of the blow-up $\mathrm{Bl}_C(V)$ for $i=1,2$, we see that $H^2(Y_i)=\C\bracket{H_i, E_i}$ and $H^2(M,\Z)\cong \bracket{(H_1,H_2), (H_1-E_1,0)}$
up to torsion. This yields that generators of $H^2(M,\Z)$ are given by $e_1=(H_1, H_2)$ and $e_2=(H_1-E_1, 0)$. 
Then we find that $H_i^3=16$, $H_iL=0$ and $E_iL=-1$ where $L$ is a fiber over a point on $C$ under the blow-up. Moreover, for $d=\deg C$, we have
\begin{align*}
H_iE_i&=dL=(2g-2)L=16L \quad \text{and}\quad
H_i^2E_i=H_i(H_iE_i)=16H_iL=0.
\end{align*}

In the same manner as in Section $\ref{sec:(2,58)}$, let us denote the number of branches of the double curve $\widetilde C$ by $\tau$.
Then we find that
\begin{align*}
E_i^2&=-dH_i^2+(4d+2g-2-2\tau)L \\
&=-16H_i^2+(64+18-2-36)L=-16H_i^2+44L,\\
H_iE_i^2&=H_i(-16H_i^2+44L)=-16H_i^3+44H_iL=-16\cdot 16=-256,\\
E_i^3&=E_i(-16H_i^2+44L)=-16E_iH_i^2+44E_iL=-44.
\end{align*}
Consequently, we have the following table of the multiplication of the intersection forms on $H^{2*}(Y_i,\Z)$:
\begin{center}
\begin{tabular}{c|ccc}
& $H_i^2$ & $L$  &  \\ \hline  \vspace{-0.4cm}\\
$H_i$ & $16$ & $0$ \\
$E_i$ & $0$ & $-1$ \\
\end{tabular}\hfil \quad
\begin{tabular}{c|ccc}
& $H_i$ & $E_i$  & \\ \hline  \vspace{-0.35cm}\\
$H_i$ & $H_i^2$ & $16L$ \\ 
$E_i$ & $16L$ & $-16H_i^2+44L$ \\
\end{tabular} 
\vskip 10pt
\end{center}
Substituting these values into the cubic products, we see that
\begin{align*}
 e_1^3&=(H_1, H_2)^3=H_1^3+H_2^3=32, \\ 
e_1^2e_2&=(H_1, H_2)^2(H_1-E_1,0)=H_1^3-H_1^2 E_1=16, \\
e_1e_2^2&=(H_1, H_2)(H_1-E_1,0)^2=H_1^3-2H_1^2E_1+H_1E_1^2=-240,\\
e_2^3&=(H_1-E_1, 0)^3=H_1^3-3H_1^2E_1+3 H_1E_1^2-E_1^3 =-708.
\end{align*}

Next we compute the $\lambda$-invariant of the doubling Calabi-Yau $3$-fold $M$. Since $V=V_{16}\subset \C P^{10}$ is an anticanonically embedded Fano $3$-fold with $-K_V=H$, we see that the first Chern class of $V$ is given by
$c_1(V)=H$. In order to find the second Chern class of $V$, we use the Riemann-Roch-Hirzebruch formula
\begin{equation}\label{thm:RRH}
\sum_{q=0}^n(-1)^q\dim H^q(V,\Omega^p)=\int_V {td} (V)\,  {ch}\!\! \left(\smallwedge^p T^*V \right)
\end{equation}
for $n=3$ and $p=0$. This yields the equality
\begin{align}
\sum_{q=0}^3(-1)^q\dim H^q(V,\Omega^0)&=\int_V\left( 1+\frac{1}{2}c_1(V)+\frac{1}{12}(c_1(V)^2+c_2(V))+\frac{1}{24}c_1(V)c_2(V)\right){ch}\!\! \left(\smallwedge^0 T^*V \right) \notag \\
\Leftrightarrow \quad h^{0,0}- h^{0,1}+ h^{0,2}- h^{0,3}&=\frac{1}{24}\int_V c_1(V)c_2(V) \label{eq:RRH}
\end{align}
Suppose that $c_2(V)=aH^2$ for $a\in \mathbb Q$. Then the Hodge diamond $\eqref{eq:No.9Hodge}$ and the equality $\eqref{eq:RRH}$ imply that
\[
\frac{1}{24}\int_VaH^3=1 \quad \Leftrightarrow \quad a=\frac{3}{2}
\]
by $\int_V H^3=(-K_V^3)=16$. Thus, we find $c_2(V)=\frac{3}{2}H^2$. As we have seen in $\eqref{eq:2ndChern}$, the second Chern classes of $Y_i$ are given by 
\begin{align*}
c_2(Y_i)&=\pi_i^*(c_2(V)+\eta_C)-\pi_i^*(c_1(V))\cdot E_i \\
&=\pi_i^*\left(\frac{3}{2}H^2+H^2 \right)-H_iE_i=\frac{5}{2}H_i^2-H_iE_i.
\end{align*}
Then the products of $c_2(M)$ and $e_i$ are
\begin{align*}
e_1\cdot c_2(M)&=\frac{5}{2}H_1^3-H_1^2E_1+\frac{5}{2}H_2^3-H_2^2E_2=80=8\cdot 10,\\
e_2\cdot c_2(M)&=(H_1-E_1)c_2(Y_1)=(H_1-E_1)\left( \frac{5}{2}H_1^2-H_1E_1\right)\\
&=\frac{5}{2}H_1^3+H_1E_1^2=\frac{5}{2}\cdot 16+(-256)=-216=-8\cdot 27.
\end{align*}
Since the subgroup $\Set{e\in \bracket{e_1,e_2} | e\cdot c_2(M)=0}$ of $H^2(M,\Z)$ is generated by \\
$27e_1+10e_2$, we see that the $\lambda$-invariant of $M$ is given by
\begin{align*}
\lambda(M)&=|(27e_1+10e_2)^3|=|27^3e_1^3+3\cdot27^2\cdot 10\cdot e_1^2e_2+3\cdot 27\cdot 10^2 e_1e_2^2+10^3 e_2^3|=1672224.
\end{align*}

\subsubsection{ID $1$-$9$: $V_{18}\subset \C P^{11}$ case}\label{sec:No.9}
Secondly, we shall consider case (b). Thus we suppose that $V=V_{18}\subset \C P^{11}$, $g=10$, $\mathrm{Pic}(V)=\Z \cdot H$ and $-K_V=H$. Furthermore, we have
$-K_V^3=18$ and 
\begin{equation*}\label{eq:No.8Hodge}
h^{p,q}(V)=\begin{array}{ccccccc}
&&& 1 &&& \\
&& 0 && 0 && \\
&0 && 1 && 0& \\
0 && 2 && 2 && 0 \\
&0 && 1 && 0& \\
&& 0 && 0 && \\
&&& 1 &&& \\
\end{array}.
\end{equation*}
Setting $D\in|\mathcal O_V(1)|$, $C\in |\mathcal O_D(1)|$ and $\pi_i:Y_i=\mathrm{Bl}_C(V)\dasharrow V$ for $i=1,2$, we see that
$H^2(Y_i)=\C\braket{H_i, E_i}$ and $H^2(M,\Z)\cong \braket{(H_1,H_2),(H_1-E_1,0)}$ up to torsion. Hence two generators of $H^2(M,\Z)$ are taken as
$e_1=(H_1,H_2)$ and $e_2=(H_1-E_1,0)$. Consequently, we find the values of the cubic forms as follows:
\begin{align*}
 e_1^3&=(H_1, H_2)^3=H_1^3+H_2^3=36, \\ 
e_1^2e_2&=(H_1, H_2)^2(H_1-E_1,0)=H_1^3-H_1^2 E_1=18, \\
e_1e_2^2&=(H_1, H_2)(H_1-E_1,0)^2=H_1^3-2H_1^2E_1+H_1E_1^2=-306,\\
e_2^3&=(H_1-E_1,0)^3=H_1^3-3H_1^2E_1+3 H_1E_1^2-E_1^3 =-904.
\end{align*}
As we computed in Section $\ref{sec:No.8}$, the second Chern class of $V$ is calculated by the Riemann-Roch-Hirzebruch formula $\eqref{thm:RRH}$, from which we conclude that $c_2(V)=\frac{4}{3}H^2$.
Thus the second Chern classes of $Y_i$ are
\[
c_2(Y_i)=\pi_i^*\left(\frac{4}{3}H^2+H^2  \right)-H_iE_i=\frac{7}{3}H_i^2-H_iE_i
\] 
for $i=1,2$. Then the subgroup $\set{e\in\braket{e_1,e_2}| e\cdot c_2(M)=0}$ of $H^2(M,\Z)$ is generated by $47e_1+14e_2$. This implies that the $\lambda$-invariant is $\lambda(M)=|(47e_1+14e_2)^3|=5529560$.

\subsubsection{ID $1$-$10$: $V_{22}\subset \C P^{13}$ case}
Finally, we consider case (c), that is, $V=V_{22}\subset \C P^{13}$ is an anticanonically embedded Fano $3$-fold with genus $g=12$, $\mathrm{Pic}(V)=\Z \cdot H$ and $-K_V=H$.
Note that the unique such $3$-fold with $\mathrm{Aut}(V)=\mathrm{PGL}(2,\C)$ is called the Mukai-Umemura $3$-fold, and we refer the reader to \cite{D08} and references therein for more details. 
By repeating the same computation in the previous section, 
we find two generators $e_1=(H_1,H_2)$ and $e_2=(H_1-E_1,0)$ of $H^2(M,\Z)$.
Thus we have
\begin{align*}
 e_1^3&=(H_1, H_2)^3=H_1^3+H_2^3=44, \\ 
e_1^2e_2&=(H_1, H_2)^2(H_1-E_1,0)=H_1^3-H_1^2 E_1=22, \\
e_1e_2^2&=(H_1, H_2)(H_1-E_1,0)^2=H_1^3-2H_1^2E_1+H_1E_1^2=-462,\\
e_2^3&=(H_1-E_1,0)^3=H_1^3-3H_1^2E_1+3 H_1E_1^2-E_1^3 =-1358.
\end{align*}
Moreover, the second Chern classes of $Y_i$ for $i=1,2$ are given by
\[
c_2(Y_i)=\pi_i^*\left(\frac{12}{11}H^2+H^2  \right)-H_iE_i=\frac{23}{11}H_i^2-H_iE_i.
\] 
Then the subgroup $\set{e\in\braket{e_1,e_2}| e\cdot c_2(M)=0}$ of $H^2(M,\Z)$ is generated by $219e_1+46e_2$. Hence the $\lambda$-invariant is $\lambda(M)=|(219e_1+46e_2)^3|=122507896$.

\end{document}